\numberwithin{equation}{section}
\newcommand{\ep}{{\epsilon}}
\newtheorem{thm}{Theorem}[section]
\newtheorem*{thm*}{Theorem}
\newtheorem{lem}[thm]{Lemma}
\newtheorem{prop}[thm]{Proposition}
\newtheorem{cor}[thm]{Corollary}
\newtheorem{rem}[thm]{Remark}
\theoremstyle{definition}
\begin{document}
\title{Normal approximation for functions of \\ hidden Markov models}

\author{Christian Houdr\'e\thanks{School of Mathematics, Georgia Institute of Technology, Atlanta, Georgia 30332-0160, USA. \newline  Email: 
\href{mailto:houdre@math.gatech.edu}{houdre@math.gatech.edu}.  Research supported in part by the grant \# 524678 from the Simons Foundation.} \and George Kerchev\thanks{Universit\'e du Luxembourg, Unit\'e de Recherche en Math\'ematiques, Maison du Nombre, 6 Avenue de la Fonte, L-4364 Esch-sur-Alzette, Grand Duch\'e du Luxembourg. \newline Email: \href{mailto:gkerchev@gmail.com}{gkerchev@gmail.com}. Research partially supported by  TRIAD NSF grant (award 1740776) and the FNR grant APOGee at Luxembourg University (R-AGR-3585-10-C). }}

\maketitle

\abstract{  The generalized perturbative approach is an all purpose variant of Stein's method used to obtain rates of normal approximation. Originally developed for functions of independent random variables this method is here extended to functions of the realization of a hidden Markov model. In this dependent setting, rates of convergence are provided in some applications, leading, in each instance, to an extra log-factor vis a vis the rate in the independent  case.}

\bigskip

\noindent{\bf AMS Mathematics Subject Classification 2010:} 60F05, 60K35, 60D05.

\medskip

\noindent{\bf Key words:} Stein's Method, Markov Chains, Generalized Perturbative Approach, Normal Approximation, Stochastic Geometry.

\bigskip

\section{Introduction} Let $X = (X_1, \ldots, X_n)$ be a random vector with coordinates in a Polish space $E$ and let $f: E^n \to \mathbb{R}$ be a measurable function such that $f(X)$ is square integrable. For a large class of such functions $f$ it is expected that as $n$ grows without bound, $f(X)$ behaves like a normal random variable. To quantify such estimates one is interested in bounding the distance between $f(X)$ and $\mathcal{N} \sim N (m_f, \sigma_f^2)$ where $m_f = \mathbb{E}[f(X)]$ and $\sigma_f^2 = Var (f(X))$. Two  such distances of interest are the Kolmogorov distance
\begin{align}
\notag d_K(f(X), \mathcal{N}) \coloneqq \sup_{t \in \mathbb{R}} |\mathbb{P} (f(X) \leq t) - \mathbb{P}(\mathcal{N} \leq t)|, 
\end{align}
and the Wasserstein distance
\begin{align}
\notag d_W (f(X), \mathcal{N}) \coloneqq \sup_{h \in Lip(1)} | \mathbb{E}[ h(f(X)) ] - \mathbb{E}[h(\mathcal{N})] |,
\end{align}
where this last supremum is taken over real valued functions $h$ such that $|h(x) - h(y)| \leq |x - y|$, for all $x, y \in \mathbb{R}$.

When  the components of $X$ are  independent random variables upper bounds on $d_W(f(X), \mathcal{N})$ were first obtained in~\cite{C1} and  these were extended to $d_K(f(X), \mathcal{N})$ in~\cite{LRP}. Both results rely on a class of difference operators that will be described in Section~\ref{s:decomp}. 

Very few results address the (weakly) dependent case,  and in the present work we provide estimates on $d_K(f(X), \mathcal{N})$ and $d_W(f(X), \mathcal{N})$ when $X$ is generated by a hidden Markov model.  Such a model is of interest from its many applications in fields such as computational biology and speech recognition, see, e.g.,~\cite{DEKM}.  Recall that a hidden Markov model $(Z, X)$ consists of a Markov chain $Z = (Z_1, \ldots, Z_n)$ which emits the observed variables $X = (X_1, \ldots, X_n)$. The possible states in $Z$ are each associated with a distribution on the values of $X$. In other words the observation $X$ is a mixture model where the choice of the mixture component for each observation depends on the component of the previous observation. The mixture components are given by the sequence $Z$.  Note also that given $Z$, $X$ is a Markov chain.

To briefly describe the content of the paper, Section~\ref{s:decomp} contains a short overview of results on normal approximation in the independent setting and introduces a simple transformation  involving iid random variables allowing to adapt these estimates to the hidden Markov model. By exploiting the structure of this construction, we develop further quantitative bounds in Section~\ref{s:bounds}, for the special case when $f$ is a Lipschitz function.   Finally,  applications to variants of the ones analyzed in~\cite{C1} and ~\cite{LRP}, are developed in Section~\ref{s:applications}, leading to an extra log-factor in the various rates obtained there.

\section{Normal approximation via Stein's method}\label{s:decomp}

Let $W \coloneqq f(X)$. Originally  in~\cite{C1}, and then in~\cite{LRP}, various bounds on the distance between $W$ and the normal distribution are obtained through a variant of  Stein's method. As is well known, Stein's method is a way to obtain normal approximation based on the observation that the standard normal distribution $\mathcal{N}$ is the only, centered and unit variance, distribution that satisfies 
\begin{align}
\notag \mathbb{E}[g'(\mathcal{N})] = \mathbb{E}[ \mathcal{N}g(\mathcal{N})],
\end{align}

\noindent for all absolutely continuous $g$ with a.e.~derivative $g'$ such that $\mathbb{E}|g'(\mathcal{N})| < \infty$,~\cite{CGS}, and for the random variable $W$, $|\mathbb{E}[ W g(W) - g'(W)]|$ can be thought of as a distance measuring the proximity of  $W$ to $\mathcal{N}$. In particular, for the Kolmogorov distance, the solutions $g_t$ to the differential equation
\begin{align}
\notag \mathbb{P} (W \leq t) - \mathbb{P}(\mathcal{N} \leq t) = g_t'(W) - W g_t(W),
\end{align}
\noindent are absolutely continuous with a.e.~derivative such that $\mathbb{E}| g_t'(\mathcal{N})| < \infty$,~\cite{CGS}. Then,
\begin{align}
\label{eq:stein} d_K(W, \mathcal{N}) = \sup_{t \in \mathbb{R}}|\mathbb{E}[ g_t'(W) - W g_t(W)]|.
\end{align}

\noindent Further properties of the solutions $g_t$ (see~\cite{LRP}) allow for upper bounds on $\mathbb{E}[ g_t'(W) - W g_t(W)]$ using difference operators associated with $W$ introduced in~\cite{C1}. This is coined as the \emph{generalized perturbative approach} in~\cite{C}, and it is described next. First, we recall  the perturbations used to bound the right-hand side of~\eqref{eq:stein} in~\cite{C1} and~\cite{LRP}.  Let $X' = (X_1', \ldots, X_n')$ be an independent copy of $X$ and let $W' = f(X')$. Then $(W, W')$ is an exchangeable pair since it has the same joint distribution as $(W', W)$. A perturbation  $W^A = f^A(X) \coloneqq f(X^A)$ of  $W$ is defined through the change $X^A$ of  $X$ as follows:
\begin{align}
\notag  X_i^{A} = 
\left\{
	\begin{array}{ll}
		X'_i  & \mbox{if } i \in A, \\
		X_i  & \mbox{if } i \notin A.
	\end{array}
\right.
\end{align}

\noindent for any $A \subseteq [n] \coloneqq \{1, \ldots, n\}$, including $A = \emptyset$. With these definitions, still following~\cite{C1}, difference operators are  defined for any $\emptyset \subseteq A \subseteq [n]$ and $i \notin A$, as:
\begin{align}
\notag  \Delta_i f^A = f(X^A) - f(X^{A \cup \{i\}}). 
\end{align}

\noindent Moreover, set
\begin{align}
\notag T_A(f) \coloneqq \sum_{j \notin A} \Delta_j f(X) \Delta_j f(X^A), \\
\notag T_A'(f) \coloneqq \sum_{j \notin A} \Delta_j f(X) |\Delta_j f(X^A)|,
\end{align}
\noindent and for $k_{n, A} = 1 / \binom{n}{|A|} (n - |A|)$, set
\begin{align}
\notag T_n(f) \coloneqq \sum_{\emptyset \subseteq A \subsetneq [n] } k_{n, A} T_A(f), \\
\notag T'_n(f) \coloneqq \sum_{\emptyset \subseteq A \subsetneq [n]} k_{n, A} T_A'(f).
\end{align}

\noindent Now for $W = f(X_1, \ldots, X_n)$ such that $\mathbb{E} [W ]= 0$, $0 < \sigma^2 = \mathbb{E} [W^2] < \infty$, and assuming all the expectations below are finite, the following result is proved, for $d_W$, in~\cite[Theorem 2.2]{C1}:
\begin{align}
\label{eq:wass_ind} d_W(\sigma^{-1} W, \mathcal{N}) \leq \frac{1}{\sigma^2} \sqrt{Var ( \mathbb{E}[ T_n(f) |X ] ) } + \frac{1}{2 \sigma^3} \sum_{j = 1}^n \mathbb{E} | \Delta_j f(X)|^3,
\end{align}

\noindent while, for $d_K$,~\cite[Theorem 4.2]{LRP} yields:
\begin{align}
\notag d_K(\sigma^{-1} W,  \mathcal{N}) \leq & \frac{1}{\sigma^2} \sqrt{Var( \mathbb{E} [ T_n(f) | X])} +  \frac{1}{\sigma^2} \sqrt{Var( \mathbb{E} [ T_n'(f) | X])}   \\
\label{eq:kol_ind} & + \frac{1}{4 \sigma^3} \sum_{j = 1}^n \sqrt{\mathbb{E} | \Delta_j f|^6} + \frac{\sqrt{2 \pi} } {16 \sigma^3} \sum_{j = 1}^n \mathbb{E} |\Delta_j f(X)|^3,
\end{align}

\noindent where in both cases $\mathcal{N}$ is now a standard normal random variable.


\noindent Our main abstract result generalize~\eqref{eq:wass_ind} and~\eqref{eq:kol_ind} to the case when $X$ is generated by a hidden Markov model. It is as follows:

\begin{prop}\label{thm:wass_hmm} Let $(Z, X)$ be a hidden Markov model with $Z$ an aperiodic time homogeneous and irreducible Markov chain with finite state  space $\mathcal{S}$, and $X$ taking values in a non-empty finite $\mathcal{A}$. Let $W \coloneqq f(X_1, \ldots, X_n)$ with $\mathbb{E}[W] = 0$ and $0 < \sigma^2 = \mathbb{E}[W^2]  < \infty$. Then, there exist a finite  sequence of independent random variables $R = (R_0, R_1, \ldots,  R_{|\mathcal{S}|(n-1)})$, with $R_i$ taking values in $\mathcal{S} \times \mathcal{A}$, for $i = 0, \ldots, |S|(n-1)$, and a measurable function $h: (\mathcal{S} \times \mathcal{A})^{|S|(n-1)+1} \longrightarrow \mathbb{R}$ such that $h(R_0, \ldots, R_{|\mathcal{S}| (n-1)} )$ and $f(X_1, \ldots, X_n)$ are identically distributed. Therefore:
\begin{align}
\label{eq:wass} d_W(\sigma^{-1} W, \mathcal{N} )  \leq \frac{1}{\sigma^2} \sqrt{Var( \mathbb{E}[T_{|R|}(h) |R ]  )} + \frac{1}{2 \sigma^3} \sum_{i = 0}^{|\mathcal{S}|(n-1)} \mathbb{E}|  \Delta_i h(R)|^3.
\end{align}

\noindent  and 
\begin{align}
\notag d_K (\sigma^{-1} W, \mathcal{N})  \leq & \frac{1}{\sigma^2} \sqrt{Var( \mathbb{E} [ T_{|R|}(h) | R])} +  \frac{1}{\sigma^2} \sqrt{Var( \mathbb{E} [ T_{|R|}'(h) | R])}   \\
\label{eq:kol} & + \frac{1}{4 \sigma^3} \sum_{j = 0}^{|R| - 1} \sqrt{\mathbb{E} | \Delta_j h(R)|^6} + \frac{\sqrt{2 \pi} } {16 \sigma^3} \sum_{j = 0}^{|R| - 1} \mathbb{E} |\Delta_j h(R)|^3.
\end{align}
\end{prop}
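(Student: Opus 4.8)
The plan is to reduce the dependent problem entirely to the independent one: I will exhibit a vector $R = (R_0, R_1, \ldots, R_{|\mathcal{S}|(n-1)})$ of \emph{independent} $\mathcal{S}\times\mathcal{A}$-valued random variables together with a measurable $h$ such that $h(R)$ and $W = f(X)$ share the same law, and then simply invoke the independent-variable bounds \eqref{eq:wass_ind} and \eqref{eq:kol_ind} with $(h, R, |R|)$ in the roles of $(f, X, n)$. Since $d_W(\sigma^{-1}\,\cdot\,,\mathcal{N})$ and $d_K(\sigma^{-1}\,\cdot\,,\mathcal{N})$, as well as the mean and the variance, depend only on the distribution of their argument, the equality in law $h(R)\overset{d}{=}W$ transports the hypotheses $\mathbb{E}[W]=0$ and $\mathbb{E}[W^2]=\sigma^2$ to $h(R)$ and gives $d_W(\sigma^{-1}W,\mathcal{N}) = d_W(\sigma^{-1}h(R),\mathcal{N})$, likewise for $d_K$; applying \eqref{eq:wass_ind} and \eqref{eq:kol_ind} to the right members then yields \eqref{eq:wass} and \eqref{eq:kol} verbatim. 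Thus the entire content of the proposition lies in the distributional representation, and everything after it is a direct application of Section~\ref{s:decomp}.

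For the construction I would use the random-mapping (update-function) representation of the chain, but recorded for \emph{every} possible current state so as to decouple the randomness. Concretely, let $R_0 = (Z_1, X_1)$, drawn from the joint law of the initial state and its emission. For each transition index $i \in \{1,\ldots,n-1\}$ and each state $s\in\mathcal{S}$, let $R_{(i,s)} = (\zeta_{i,s}, \xi_{i,s})\in\mathcal{S}\times\mathcal{A}$, where $\zeta_{i,s}$ is drawn from $\mathbb{P}(Z_{i+1}=\cdot\mid Z_i=s)$ and $\xi_{i,s}$ from the emission kernel given $\zeta_{i,s}$, all $|\mathcal{S}|(n-1)$ of these variables being generated mutually independently and independently of $R_0$. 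Relabelling the doubly-indexed family $\{R_{(i,s)}\}$ as $R_1,\ldots,R_{|\mathcal{S}|(n-1)}$ produces the claimed $R$, of total length $|\mathcal{S}|(n-1)+1$. The function $h$ walks a trajectory through this table: set $(\hat Z_1,\hat X_1)=R_0$ and inductively $(\hat Z_{i+1},\hat X_{i+1}) = R_{(i,\hat Z_i)}$, reading off only the entry indexed by the state the walk currently occupies, and finally put $h(R) := f(\hat X_1,\ldots,\hat X_n)$.

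The crux, and the step I expect to be the main obstacle, is verifying that $(\hat X_1,\ldots,\hat X_n)\overset{d}{=}(X_1,\ldots,X_n)$, equivalently that the reconstructed pair $(\hat Z,\hat X)$ is a genuine realization of the hidden Markov model. I would argue by induction on $i$: the event $\{\hat Z_i=s\}$ is a function of $R_0$ and of the entries $R_{(j,\cdot)}$ with $j<i$ only, whereas $R_{(i,s)}$ is independent of all of these; hence, conditionally on $\{\hat Z_i=s\}$, the pair $(\hat Z_{i+1},\hat X_{i+1})=R_{(i,s)}$ keeps its unconditional law, namely $\mathbb{P}(Z_{i+1}=\cdot\mid Z_i=s)$ composed with the emission kernel. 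This is precisely the one-step HMM dynamics, so the induction closes and the two laws coincide. The delicate point is exactly the decoupling obtained by tabulating a transition for each $s$ rather than a single transition for the realized state: a lone update variable per step would have to consult $\hat Z_i$, destroying the mutual independence on which both the construction and the subsequent appeal to \eqref{eq:wass_ind}--\eqref{eq:kol_ind} rest. This is what forces the $|\mathcal{S}|$-fold inflation in the number of independent coordinates and, downstream, accounts for the extra logarithmic factor appearing in the applications.
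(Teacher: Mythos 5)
Your proposal is correct and follows essentially the same route as the paper: the same ``stack of instructions'' construction (one tabulated transition-plus-emission pair per step per state, consulted according to the current reconstructed state), followed by a direct transfer of the independent-case bounds~\eqref{eq:wass_ind} and~\eqref{eq:kol_ind} via equality in distribution. Your inductive verification that $(\hat Z,\hat X)$ reproduces the HMM law is in fact spelled out more carefully than in the paper, which merely asserts this step.
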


\noindent At a first glance the above results might appear to be simple corollaries to~\eqref{eq:wass_ind} and ~\eqref{eq:kol_ind}. Indeed, as well known,  every Markov chain (in a Polish space) admits a representation  via iid   random variables $U_1, \ldots, U_n$, uniformly distributed on $(0, 1)$ and the inverse distribution function. Therefore,  $f(X_1, \ldots, X_n) \overset{d}{=} h(U_1, \ldots, U_n)$, for some function $h$, where, as usual, $\overset{d}{=}$ indicates equality in distribution. However, providing quantitative estimates for $\mathbb{E} |\Delta_j h(U_1, \ldots, U_n)|$  via $f$ seems to be out of reach, since passing from $f$ to $h$ involves the ``unknown" inverse distribution function. For this reason, we develop, for our analysis, a more amenable, although more restrictive, choice of iid random variables  described intuitively in the next paragraph and then again in greater details in Section~\ref{s:construction}.

\noindent Consider $R = (R_0, \ldots, R_{|\mathcal{S}|(n-1)})$ as stacks of independent random variables on the $|\mathcal{S}|$ possible states of the hidden chain that determine the next step in the process, with $R_0$ specifying the initial state. Each $R_i$ takes values in $\mathcal{S} \times \mathcal{A}$ and is distributed according to the transition probability from the present hidden state.  Then, one has $f(X_1, \ldots , X_n) \overset{d}{=} h (R_0, \ldots , R_{|\mathcal{S}|(n-1)})$, for $h =  f \circ \gamma$,  where the function $\gamma$ translates between $R$ and $X$, and where $\overset{d}{=}$ indicates equality in distribution. This construction is carried out in more details in the next section. Further note that  when $(X_i)_{i \geq 1}$ is a sequence of independent random variables, the hidden chain in the model consists of a single state and then the function $\gamma$ is the identity function.

\begin{rem} As observed in~\cite{CSZ}, the terms involving $\Delta_i h(R)$ in~\eqref{eq:wass} and~\eqref{eq:kol} can be removed, leaving only the variance terms. Here is a different way to establish this fact. Indeed, recall that the expressions on the right-hand side of~\eqref{eq:wass} and~\eqref{eq:kol} are bounds on terms of the form $\mathbb{E}| g_t'(W) - g_t'(W) T| + |\mathbb{E} [g_t(W) W  - g_t'(W) T] |$, where $|g_t'| \leq 1$ and $|g_t(W)W - g_t'(W)| = |\mathbf{1}_{W \leq t} - \mathbb{P}(\mathcal{N} \leq t)| \leq 1$ (see~\cite{LRP} and~\cite{C}). First, note that
\begin{align}
\notag |g_t'(W) - g_t'(W)T | \geq |g_t'(W) T | - 1, 
\end{align} 
\noindent and
\begin{align}
\notag 1 \geq |g_t(W)W - g_t'(W)| \geq |g_t(W) W| - 1.
\end{align}
\noindent Then, by the triangle inequality and the above,
\begin{align}
\notag |g_t(W) W  - g_t'(W) T| \leq |g_t(W) W| + |g_t'(W) T| \leq  |g_t'(W) - g_t'(W)T | + 3.
\end{align}
\noindent Therefore, if $  \mathbb{E} | g_t'(W) - g_t'(W) T|  / \sigma^2 \to 0$, then 
\begin{align}
\notag | \mathbb{E}[ g_t(W)W - g_t'(W)T]| /   \sigma^3 \leq C   \mathbb{E} | g_t'(W) - g_t'(W) T|  / \sigma^2, 
\end{align} 
\noindent for some constant $C > 0$ that does not depend on $n$. Therefore, the asymptotic behavior of the bounds in~\eqref{eq:wass} and~\eqref{eq:kol} is given by the terms corresponding to $\mathbb{E} | g_t'(W) - g_t'(W)T|$, i.e., the terms involving the variance. This modification of the method is also valid in our framework and would ``improve" our results. However, this has no really significant  incidence on the rates obtained in our applications in Section~\ref{s:applications}, and so this will not be pursued here any further.

\end{rem}

\subsection{Construction of $R$}\label{s:construction}

Let $(Z, X)$ be a hidden Markov model with $Z$ an aperiodic time homogeneous and irreducible Markov chain on a finite state space $\mathcal{S}$, and $X$ taking values in an alphabet $\mathcal{A}$. Let $P$ be transition matrix of the hidden chain and let  $Q$ be the $|\mathcal{S}| \times |\mathcal{A}|$ probability matrix for the observations, i.e., $Q_{ij}$ is the probability of seeing output $j$ if the latent chain is in state $i$. Let the initial distribution of the hidden chain be $\mu$. Then 
\begin{align}
\notag & \mathbb{P} \bigg( (Z_1, \ldots, Z_n; X_1, \ldots, X_n) = (z_1, \ldots, z_n; x_1, \ldots, x_n) \bigg) \\
\notag   & \quad \quad  \quad   =  \mu(z_1) Q_{z_1, x_1} P_{z_1, z_2} \ldots P_{z_{n-1}, z_n} Q_{z_n, x_n}. 
\end{align}

\noindent Next we introduce a sequence of independent random variables $R_0, \ldots, R_{|\mathcal{S}|(n-1)}$ taking values in $\mathcal{S} \times \mathcal{A}$ and a function $\gamma$ such that $\gamma(R_0, \ldots, R_{|\mathcal{S}| (n-1) }) = (Z_1, \ldots, Z_n;$ $ X_1, \ldots , X_n)$.  For any $s, s' \in \mathcal{S}$, $x \in \mathcal{A}$ and $i \in \{0, \ldots, n-1\}$, let
\begin{align}
\notag  \mathbb{P}\big(R_0 = (s, x) \big)   & = \mu(s) Q_{s, x}, \\
\notag  \mathbb{P} \big(R_{i |S| + s'} = (s, x) \big) &  = P_{s' , s} Q_{s, x}. 
\end{align}

\noindent The random variables $R_i$ are well defined since $\sum_x Q_{s, x} = 1,$  for any $s \in \mathcal{S}$, and $\sum_s P_{s', s} = \sum_s \mu(s) = 1,$ for any $s' \in \mathcal{S}$. One can think of the variables $R_i$ as a set of instructions indicating where the hidden Markov model goes next. The function $\gamma$ reconstructs the realization $(Z_i, X_i)_{i \geq 1}$ sequentially from the sequence $(R_i)_{i \geq 0}$. In particular, $\gamma$ captures the following relations
\begin{align}
\notag  (Z_1, X_1) & = R_0, \\
\notag  (Z_{i+1}, X_{i+1}) & = R_{i |S| + s} \mbox{ , if } Z_i = s \mbox { for } i \geq 1.
\end{align}

\noindent One can also think of the sequence $(R_i)_{i \geq 0}$ as $|\mathcal{S}|$ stacks of random variables on the $\mathcal{S}$ possible states of the latent Markov chain, and the values being rules for the next step in the model. Note that only one variable on the $i$th level of the stack will be used to determine the $(i+1)$-st hidden and observed pair. Furthermore, the distribution of the random variables $R_i$, for $i \geq 1$ encodes the transition and output probabilities in the $P$ and $Q$ matrices of the original model.

\noindent Thus one can write $f(X_1, \ldots, X_n) = h(R_0, \ldots , R_{|\mathcal{S}| (n-1) })$, for $h \coloneqq f \circ \gamma$, where the function $\gamma$ does the translation from $(R_i)_{i \geq 0}$ to $(Z_i, X_i)_{i \geq 1}$ as described above.

\noindent Let $R' = (R_0', \ldots, R_{|\mathcal{S}| (n-1)}')$ be an independent copy of $R$. Let $A \subseteq \{0, 1, \ldots, |S|(n-1)\}$ and let the change $R^A$ of $R$ be defined as follows
\begin{align}
\label{eq:rder_def} R_i^A = \left\{ \begin{array}{rl} R_i' & \mbox { if } i \in A \\ R_i & \mbox{ if } i \notin A, \end{array}\right.
\end{align}

\noindent where, as before, when $A = \{j\}$ we write $R^j$ instead of $R^{\{j\}}$.

\noindent Recall that the ``discrete derivative" of $h$ with a perturbation $A$ is 
\begin{align}
\notag \Delta_i h^A = h(R^A) - h (R^{A \cup \{i\}}). 
\end{align}

\noindent Then~\eqref{eq:wass} and~\eqref{eq:kol} follow from~\eqref{eq:wass_ind} and~\eqref{eq:kol_ind}, respectively, since when $(Z, X)$ is a hidden Markov model one writes
\begin{align}
\notag W = f(X_1, \ldots, X_n) \overset{d}{=} h (R_0, \ldots, R_{|S|(n-1)}),
\end{align}

\noindent where the sequence $(R_i)_{i \geq 0}$ is a sequence of independent random variables.

\begin{rem}\noindent{ (i)} The idea for using stacks of independent random variables to represent a hidden Markov model is somehow reminiscent of  Wilson's cycle popping algorithm for generating a random directed spanning tree, see~\cite{W}. The algorithm has also been related to loop-erased random walks in~\cite{GP}. 

\noindent{(ii)} If $S$ consists of a single state, making the hidden chain redundant, there is a single stack of instructions. This corresponds to the independent setting of~\cite{C1} and~\cite{LRP}, and then $\gamma$ is just  the identity function.

\noindent{(iii)} The same approach of using instructions is also applicable when $\mathcal{A}$ and $\mathcal{S}$ are countable. The $Q_{s,x}$ no longer form a finite matrix but the same definition holds as long as $\sum_{x \in \mathcal{A}} Q_{s,x} = 1$, for all $s \in \mathcal{S}$. We need countably infinite independent instructions to encode $(Z_i, X_i)_{1 \leq i \leq n}$. In particular, let $R_0$ and $(R_{i, s})_{1 \leq i \leq n, s \in \mathcal{S}}$ be such that
\begin{align}
\notag \mathbb{P}(R_0 = (s,x) ) = \mu(s) Q_{s,x}, \\
\notag \mathbb{P}(R_{i,s'} = (s, x)) = P_{s', s} Q_{s,x}.
\end{align} 
\noindent Then the function $\gamma$ reconstructs $(Z_i, X_i)_{1 \leq i \leq n}$ from $R_0$ and $(R_{i,s})_{1 \leq i \leq n, s \in \mathcal{S}}$ via
\begin{align}
\notag (Z_1, X_1) & = R_0, \\
\notag (Z_{i+1}, X_{i+1} ) & = R_{i, s}, \mbox{ if }  Z_i = s \mbox { for } i \geq 1.
\end{align} 

\end{rem}

\section{Further quantitative bounds}\label{s:bounds}

In the present section several bounds on the quantities appearing on the right-hand side of~\eqref{eq:wass} and~\eqref{eq:kol} are presented, under some standard assumption on the underlying hidden Markov model. Furthermore, assuming a Lipschitz property for the function $f$ in $W \coloneqq f(X)$, it is shown that up to a log factor, $Var ( f(X)) $ is linearly upper-bounded in the size of $X$.

\subsection{Bounds on $\Delta_i h$}

Again, let the latent chain in the hidden Markov model be irreducible and aperiodic, with finite state space $\mathcal{S}$, and assume that it is started at the stationary distribution. Then there exist $K \geq 1$, and $\ep \in (0,1)$, such that
\begin{align}
\notag \mathbb{P} (Z_n = s, Z_{n + K} = s') \geq \ep,
\end{align}
\noindent and thus,
\begin{align}
\label{eq:ep_K}  \mathbb{P}(Z_{n + K} = s') \geq \ep, \quad \mathbb{P}(Z_{n+K} = s' | Z_n = s) \geq \ep, 
\end{align}
\noindent for all $n \geq 1$ and $s, s' \in \mathcal{S}$. A simple corollary of these facts is the following lemma.

\begin{lem}\label{lem:ind_dec} Let $K \geq 1$ and $\ep \in (0,1)$ be as in~\eqref{eq:ep_K} and $(Z_i)_{i \geq 1}$ be an irreducible and aperiodic Markov chain with finite state space $\mathcal{S}$. Then, 
\begin{align}
\label{eq:ind_dec} \mathbb{P}\left(Z_{j+K} \neq s_1, Z_{j+2K} \neq s_2, \ldots, Z_{j+tK} \neq s_t\right)  \leq (1 - \ep)^t,
\end{align}
\noindent for any $t \geq 1$, $j \geq 1$ and $(s_1, \ldots, s_t) \in \mathcal{S}^t$.
\end{lem}

\begin{proof} We show~\eqref{eq:ind_dec} by induction. The case $t = 1$ follows from~\eqref{eq:ep_K}. Next, for $(s_1, \ldots, s_{t+1}) \in \mathcal{S}^{t+1}$,
\begin{align}
\notag \mathbb{P}& \left(Z_{j+K} \neq s_1, Z_{j+2K} \neq s_2, \ldots, Z_{j+(t+1)K} \neq s_{t+1}\right) \\
\notag = & \quad \sum_{s_1' \neq s_1, \ldots, s_{n+1}' \neq s_{t+1} } \mathbb{P} (Z_{j+K}  = s_1', \ldots, Z_{n+1} = s_{t+1}') \\
\notag = & \sum_{s_1' \neq s_1, \ldots, s_{n+1}' \neq s_{t+1} } \mathbb{P} (Z_{j+(t+1)K} = s_{t+1}' | Z_{j+K}  = s_1', \ldots, Z_{j + tK} = s_{t}') \\
\notag & \quad \quad \quad \quad\quad\quad \quad\quad \quad \cdot \mathbb{P} ( Z_1 = s_1', \ldots, Z_{j + tK} = s_{t}') \\
\notag = &  \sum_{s_1' \neq s_1, \ldots, s_{n+1}' \neq s_{t+1} } \mathbb{P} (Z_{j+(t+1)K} = s_{t+1}' |  Z_{j + tK} = s_t') \mathbb{P} (  Z_{j+K}  = s_1', \ldots, Z_{j + tK} = s_{t}') \\ 
\notag = &  \sum_{s_1' \neq s_1, \ldots, s_{t}' \neq s_{t} } \mathbb{P} (Z_{j+(t+1)K} \neq s_{t+1} |  Z_{j + tK} = s_{t}') \mathbb{P} (  Z_{j+K}  = s_1', \ldots, Z_{j + tK} = s_{t}') \\ 
\notag \leq & (1 - \ep) \sum_{s_1' \neq s_1, \ldots, s_{t}' \neq s_{n} } \mathbb{P} (  Z_{j+K}  = s_1', \ldots, Z_{j + tK} = s_{t}') \\ 
\notag = & (1 - \ep) \mathbb{P} (Z_{j+K}  \neq s_1, \ldots, Z_{tK} \neq s_t) \\
\notag \leq & (1 - \ep)^{t+1}, 
\end{align}
\noindent where we have used the Markov property,~\eqref{eq:ep_K} and finally the induction hypothesis. This suffices for the proof of~\eqref{eq:ind_dec} and thus the proof of the lemma is complete. 
\end{proof}

\noindent The next result provides first a tail inequality from which moments can be estimated.

\begin{prop}\label{prop:der_tail} Let $(Z, X)$ be a hidden Markov model as above and let $K > 0$ and $\ep > 0$ be as in~\eqref{eq:ep_K}. Let $g:\mathcal{A}^n \to \mathbb{R}$ be Lipschitz, i.e., be such that $|g(x) - g(y)| \leq  c\sum_{i =1}^n \mathbf{1}_{x_i \neq y_i}$, for every $x, y \in \mathcal{A}^n$, and where $c > 0$. Let $R = (R_0, \ldots, R_{|S|(n-1)})$ be a vector of independent random variables and $h$ be the function such that
\begin{align}
\notag g(X_1, \ldots, X_n) \overset{d}{=} h(R_0, \ldots,  R_{|S|(n-1)}).
\end{align}
\noindent Then, for $R^i$, as defined in~\eqref{eq:rder_def},
\begin{align}
\label{eq:der_pr} \mathbb{P} (|h(R) - h(R^i)| \geq c x   ) \leq C (1 - \ep)^{x/K},
\end{align}
\noindent for any $x \in \mathbb{N}$, and where $C > 0$ depends on the parameters of the model but neither on $n$ nor on $x$.  Then, for any $r > 0$, 
\begin{align}
\label{eq:der_exp} \mathbb{E}| h(R) - h(R^i)|^r \leq \tilde{C} (\ln n)^r,
\end{align}
\noindent for $n$ large enough and where $\tilde{C} = \tilde{C}(r)$.

\end{prop}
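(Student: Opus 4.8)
The plan is to prove the tail bound~\eqref{eq:der_pr} first, and then to deduce the moment bound~\eqref{eq:der_exp} by integrating the tail. The key observation is that changing a single instruction $R_i$ to $R_i'$ perturbs the reconstruction of $(Z_j, X_j)_{j \geq 1}$ in a controlled way: since $\gamma$ rebuilds the chain sequentially, the two realizations built from $R$ and from $R^i$ agree up to the point where the altered instruction is first consulted, and thereafter the hidden states may diverge. What governs the number of coordinates on which $h(R)$ and $h(R^i)$ can differ is the time it takes the two coupled hidden chains to recoalesce. Because $g$ is Lipschitz with constant $c$, we have $|h(R) - h(R^i)| \leq c N$, where $N$ is the number of indices $j$ at which the two reconstructed chains disagree.

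First I would set up the coupling explicitly. Running $\gamma$ on $R$ and on $R^i$ produces two hidden-state sequences $(Z_j)$ and $(\tilde Z_j)$ which are identical until the step at which index $i$ enters as an instruction; from that step on, the two sequences use instructions drawn from the same independent stacks, so whenever the two chains occupy the same state at the same time they consult the identical instruction and recoalesce forever after. Thus $N$ is controlled by the first coalescence time of the two chains after the perturbation takes effect. The heart of the argument is to bound the probability that the two chains have failed to meet for many steps, and this is exactly what Lemma~\ref{lem:ind_dec} supplies: taking $s_1, \ldots, s_t$ to be the states of one chain (or arguing on the event that the other chain avoids meeting it at the sampled times $j+K, j+2K, \ldots$), the probability of non-coalescence over $t$ blocks of length $K$ is at most $(1-\ep)^t$. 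Setting $t = \lfloor x/K \rfloor$ and absorbing the block structure into the constant $C$ yields $\mathbb{P}(N \geq x) \leq C(1-\ep)^{x/K}$, which is~\eqref{eq:der_pr}.

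For the moment bound, I would write $\mathbb{E}|h(R) - h(R^i)|^r = c^r \mathbb{E} N^r$ and split the expectation at the threshold $x_0 = M \ln n$ for a suitable constant $M$. On the region $N \leq x_0$ one bounds $N^r \leq (M \ln n)^r$ trivially. On the tail region, using~\eqref{eq:der_pr} one has $\sum_{x > x_0} x^r (1-\ep)^{x/K}$, and choosing $M$ large enough (namely $M > r / |\ln(1-\ep)| \cdot K$, up to constants) forces the tail contribution to be bounded by a constant, since the geometric decay beats the polynomial factor $x^r$ once $x$ exceeds a multiple of $\ln n$. Collecting the two pieces gives $\mathbb{E} N^r \leq \tilde C (\ln n)^r$ for $n$ large, which is~\eqref{eq:der_exp}.

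The main obstacle I anticipate is making the coupling argument for~\eqref{eq:der_pr} fully rigorous, in particular justifying that the divergence of the two reconstructions is genuinely governed by a coalescence event to which Lemma~\ref{lem:ind_dec} applies. One has to be careful that after the perturbation the two chains are driven by the same stacks (so that equality of states at a common time forces all future states to agree), that the observation coordinates $X_j$ coincide precisely when the hidden states coincide (since the same instruction in $\mathcal{S} \times \mathcal{A}$ is read), and that the sampling at the arithmetic times $j+K, j+2K, \ldots$ in the lemma correctly lower-bounds the per-block coalescence probability uniformly in the starting configuration. The constant $C$ must be shown to be independent of $n$ and $x$, which requires that the $\ep$ and $K$ from~\eqref{eq:ep_K} be uniform over all pairs of states and all starting times — precisely the content of the stationarity and ergodicity hypotheses invoked there.
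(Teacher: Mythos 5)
Your proposal is correct and follows essentially the same route as the paper: the same coupling of the two reconstructions driven by $R$ and $R^i$, the same reduction of the number of disagreeing coordinates to the coalescence time of the two hidden chains, and the same appeal to Lemma~\ref{lem:ind_dec} to get the geometric tail, from which~\eqref{eq:der_pr} follows with $t = \lfloor x/K\rfloor$. The only (immaterial) difference is in passing from tail to moments: you sum $x^r\,\mathbb{P}(N\geq x)$ over $x > M\ln n$ directly, whereas the paper uses the crude bound $|h(R)-h(R^i)|\leq 2cn$ on the bad event and optimizes the threshold $t \sim \ln n$; both yield $\tilde C(\ln n)^r$.
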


\begin{proof}

The sequence of instructions $R^i$ may give rise to a different realization $(Z', X')$ of the hidden Markov model, as compared to $(Z,X)$ - the one generated by $R$. The two models are not independent. In particular, if instruction  $R_i$ determines $(Z_j, X_j)$ and $R_i'$ determines $(Z_j', X_j')$ then $(Z_k, X_k) = (Z'_k, X_k')$ for $k < j$. Let $s$ be the smallest nonnegative integer (possibly $s = \infty$) such that $Z_{j+s} = Z'_{j+s}$. Then for any $k > j+s$, $(Z_k, X_k) = (Z'_k, X_k')$ as well. Finally, if $k \in \{j, \ldots, j + s - 1\}$, the pairs $(Z_k, X_k)$ and $(Z_k', X_k')$ are independent.   We show next,  that  for  $K \geq 1$ as in~\eqref{eq:ep_K}, and any $t \in \mathbb{N}$,
\begin{align}
\label{eq:s_decay}  \mathbb{P}(s \geq tK) \leq (1 - \ep)^t.
\end{align}

\noindent Indeed, 
\begin{align}
\notag \mathbb{P}(s > tK) \leq & \quad  \mathbb{P} \left(Z_{j + K} \neq Z_{j+K}',Z_{j + 2K} \neq Z_{j+2K}',\ldots,  Z_{j + tK} \neq Z_{j+tK}'\right) \\ 
\notag  = & \quad \sum_{(s_1, \ldots, s_t) \in \mathcal{S}^t} \mathbb{P} \left(Z_{j + K} \neq s_1, Z_{j+K}' = s_1, \ldots,  Z_{j + tK} \neq s_t, Z_{j+tK}' = s_t\right).
\end{align}
\noindent By independence, 
\begin{align}
\notag  \mathbb{P} &  \left(Z_{j + K} \neq s_1, Z_{j+K}' = s_1, \ldots,  Z_{j + tK} \neq s_t, Z_{j+tK}' = s_t\right) \\
\notag = & \quad \mathbb{P}(Z_{j+K}  \neq s_1, \ldots, Z_{tK} \neq s_t) \mathbb{P}(Z_{j+K}' = s_1, \ldots, Z_{tK}' =  s_t),
\end{align}
\noindent and thus by Lemma~\ref{lem:ind_dec}
\begin{align}
\notag \mathbb{P}(s > tK) \leq&  \quad \sum_{(s_1, \ldots, s_t)} ( 1- \ep)^t \mathbb{P}(Z_{j+K}' = s_1, \ldots, Z_{tK}' =  s_t) \\
\notag \leq & \quad (1 - \ep)^t,
\end{align}
\noindent as desired.

\noindent Let $E(t)$ be the event 
\begin{align}
\notag E(t) \coloneqq \{X_{j + K} \neq X_{j + K}', X_{j + 2K} \neq X_{j + 2K}' , \ldots, X_{j + tK} \neq X_{j + tK}'\}.
\end{align}

\noindent Note that $\mathbb{P} ( E(t) ) \leq \mathbb{P}(s \geq tK)  \leq (1 - \ep)^{t}$. In particular, if $|h(R) - h(R^i)| \geq cx$, where $c > 0$ is the Lipschitz constant of $g$, then $s \geq x$, as there are at least $x$ positions $k$ such that $X_k \neq X_k'$. Thus,
\begin{align}
\notag  \mathbb{P} ( |h(R) - h(R^i) | \geq cx) \leq  & \quad \mathbb{P} (E(\lfloor x/K \rfloor)) \\
\label{eq:der_pr_proof} \leq & \quad C (1 -  \ep)^{x/K},
\end{align}

\noindent where $C > 0$ depends on the parameters of the model but not on $x$. This suffices for the proof of~\eqref{eq:der_pr}. Next for~\eqref{eq:der_exp},  let $E_t$ be the event that $|h(R) - h(R^i)| \geq tK$. Then
\begin{align} 
\notag \mathbb{E} |h(R) - h(R^i)|^r = \mathbb{E} |h(R) - h(R^i)|^r \mathbf{1}_{E_t} + \mathbb{E} |h(R) - h(R^i)|^r \mathbf{1}_{E_t^c},
\end{align}

\noindent Recall that $|g(x)| \leq cn$, for all $x \in \mathcal{A}^n$, and then $|h(R) - h(R^i)| \leq 2cn$. Using~\eqref{eq:der_pr_proof}, 
\begin{align} 
\notag \mathbb{E} |h(R) - h(R^i)|^r   & \leq (2cn)^r \mathbb{P}(E_t) + (ctK)^r \mathbb{P}(E_t^c)  \\
\label{eq:pertub} & \leq  (2cn)^r (1 - \ep)^{t} + (ctK)^r.
\end{align}

\noindent  Let $t = -  r \ln n  / (\ln (1 - \ep) ) > 0$. Then,
\begin{align}
\label{eq:pertub_bound} \mathbb{E}|h(R) - h(R^i) |^r \leq    (2c)^r + \left( -  \frac{ c r K }{\ln (1 - \ep) } \right)^r (\ln n)^r.
\end{align}

\noindent The order of the bound is optimal for $t$ such that 
\begin{align}
\label{eq:order_xn} (1 - \ep)^{t } \leq \left( \frac{\ln n}{n}\right)^r,
\end{align}
\noindent or
\begin{align}
\notag t \geq - \frac{r (\ln n - \ln (\ln n)) }{\ln (1 - \ep)},
\end{align}
\noindent it follows that 
\begin{align}
\notag \mathbb{E}|h(R) - h(R^i) |^r \leq   (2c)^r + \left( -  \frac{c  r K }{\ln (1 - \ep) } \right)^r (\ln n - \ln (\ln n))^r,
\end{align}
\noindent and the right-hand side has the same order of growth as~\eqref{eq:pertub_bound}.

\noindent If the growth order of $(1 - \ep)^{t}$ is larger than the one in~\eqref{eq:order_xn}, the bound on the second term in~\eqref{eq:pertub} is of larger order as well.

\end{proof}

\begin{rem} Recall that in the independent setting, there is a single stack, or equivalently the state space of the latent chain consists of a single element. Then for $s$ defined in the first paragraph of the above proof, $\mathbb{P}(s > 1) = 0$. Thus we can take $tK = 2$, and since $\mathbb{P}(E_t ) \leq \mathbb{P}(s \geq tk) = 0$,~\eqref{eq:pertub} becomes
\begin{align} 
\notag \mathbb{E} |h(R) - h(R^i)|^r \leq (2c)^r,
\end{align}
\noindent which recovers the independent case.
\end{rem}

\noindent The proposition just proved leads to upper bounds on the central moments of $g(X)$.

\begin{cor}\label{cor:var} Let $(Z, X)$ be a hidden Markov model as above. Let $g : \mathcal{A}^n \to \mathbb{R}$ be such that $|g(x) - g(y) | \leq c \sum_{i =1 }^n \mathbf{1}_{x_i \neq y_i}$, for all $x, y \in \mathcal{A}^n$ and where $c > 0$. Then, for any $r > 0$, 

\begin{align}
\notag  \mathbb{E}|g(X_1, \ldots, X_n) - \mathbb{E}[g(X_1, \ldots, X_n)]|^r  \leq C n^{r/2} (\ln n)^r, 
\end{align}

\noindent for  $n$ large enough and where $C = C(|\mathcal{S}|, r)$.
\end{cor}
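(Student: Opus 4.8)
The plan is to bound the central moment of $g(X)$ by relating it to the discrete derivatives $\Delta_i h(R)$ whose moments are already controlled by Proposition~\ref{prop:der_tail}. Since $g(X_1,\ldots,X_n) \overset{d}{=} h(R_0,\ldots,R_{|\mathcal{S}|(n-1)})$, it suffices to bound the central moments of $W \coloneqq h(R)$, where $R$ is a vector of $N \coloneqq |\mathcal{S}|(n-1)+1$ \emph{independent} coordinates. This is exactly the setting for an Efron--Stein / martingale-difference type moment inequality, and I would invoke such a concentration bound for functions of independent random variables.

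First I would set $W = h(R)$ and introduce the Doob martingale associated with revealing the coordinates $R_0, R_1, \ldots$ one at a time, writing $W - \mathbb{E}[W] = \sum_{i=0}^{N-1} D_i$ where $D_i \coloneqq \mathbb{E}[W \mid R_0,\ldots,R_i] - \mathbb{E}[W \mid R_0,\ldots,R_{i-1}]$ is a martingale difference sequence. Each $D_i$ can be compared to the discrete derivative $\Delta_i h(R)$: because $R_i'$ is an independent copy of $R_i$, one has the standard representation controlling $|D_i|$ by a conditional expectation of $|h(R) - h(R^i)| = |\Delta_i h^{\emptyset}|$. The key quantitative input is then~\eqref{eq:der_exp}, which gives $\mathbb{E}|h(R) - h(R^i)|^r \leq \tilde{C}(\ln n)^r$ for each $i$, uniformly in $i$.

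Next I would apply a Burkholder--Davis--Gundy or Marcinkiewicz--Zygmund inequality for martingales (or, for the cleaner route, the Efron--Stein moment inequalities of Boucheron--Lugosi--Massart), which yield, for $r \geq 2$,
\begin{align}
\notag \mathbb{E}|W - \mathbb{E}[W]|^r \leq C_r \, \mathbb{E}\Bigl( \sum_{i=0}^{N-1} |\Delta_i h(R)|^2 \Bigr)^{r/2}.
\end{align}
The sum has $N = O(n)$ terms, so after an application of Jensen's (or the power-mean) inequality to pass the $r/2$ power inside the average over the $N$ indices, the bound becomes $C_r N^{r/2 - 1} \sum_{i} \mathbb{E}|\Delta_i h(R)|^r \leq C_r N^{r/2} \max_i \mathbb{E}|\Delta_i h(R)|^r$. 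Combining this with~\eqref{eq:der_exp} and $N = O(n)$ gives $\mathbb{E}|W - \mathbb{E}[W]|^r \leq C n^{r/2} (\ln n)^r$, which is the claim for $r \geq 2$; the case $0 < r < 2$ follows by Jensen's inequality applied to the already-established $r=2$ bound.

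The main obstacle I anticipate is the justification of the step bounding the martingale differences $D_i$ by $\Delta_i h(R)$ in the dependent hidden-Markov coordinates. Although the $R_i$ are genuinely independent by construction (this is precisely the point of Section~\ref{s:construction}), one must be careful that the discrete derivative $\Delta_i h(R) = h(R) - h(R^i)$ appearing in Proposition~\ref{prop:der_tail} is the resampling difference in the $i$th coordinate, and that the Efron--Stein/martingale machinery is being applied to the independent vector $R$ rather than to the dependent observation vector $X$. A secondary technical point is tracking the constant's dependence on $|\mathcal{S}|$, which enters both through $N = |\mathcal{S}|(n-1)+1$ and through the constant $\tilde{C}(r)$ in~\eqref{eq:der_exp} (itself depending on $\ep$ and $K$, hence on the chain and thus on $|\mathcal{S}|$); absorbing these into a single $C = C(|\mathcal{S}|, r)$ is routine but should be stated carefully.
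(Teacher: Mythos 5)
Your proposal is correct and follows essentially the same route as the paper: both pass to the independent representation $W=h(R)$, apply an Efron--Stein/martingale-difference moment inequality for functions of independent random variables (the paper cites the generalized Efron--Stein inequality of Rhee--Talagrand and Houdr\'e--Ma, which plays the role of your BDG plus power-mean step), and then feed in the uniform bound $\mathbb{E}|h(R)-h(R^i)|^r\leq\tilde{C}(\ln n)^r$ from Proposition~\ref{prop:der_tail} together with $|R|=O(|\mathcal{S}|n)$. Your treatment of $0<r<2$ via Jensen from the $r=2$ case matches the paper's separate small-$r$ inequality, so no gap remains.
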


\begin{proof} As in Proposition~\ref{prop:der_tail} let $R = (R_0, \ldots, R_{|\mathcal{S}| (n-1)})$ be a vector of independent random variables and $h$ be a function such that
\begin{align}
\notag g(X_1, \ldots, X_n) = h(R).
\end{align}

\noindent Let $R' = (R_0', \ldots, R_{|\mathcal{S}|(n-1)}')$ be an independent copy of $R$.  Recall the generalization of the Efron-Stein inequality (see~\cite{HM},~\cite{RT}) for $r \geq 2$
\begin{align}
\notag  (\mathbb{E}|h(R) - \mathbb{E}h(R)|^r)^{1/r} \leq \frac{r - 1}{2^{1/r}} \left( \sum_{i = 0}^{|R|-1} (\mathbb{E}| h(R) - h(R^i)|^r )^{2/r}\right)^{1/2},
\end{align}
\noindent and for $ r \in (0,2)$, 
\begin{align}
\notag  (\mathbb{E}|h(R) - \mathbb{E}h(R)|^r)^{1/r} \leq \frac{1}{\sqrt{2}} \left( \sum_{i = 0}^{|R|-1} \mathbb{E}| h(R) - h(R^i)|^2 \right)^{1/2},
\end{align}

\noindent with $R^i$ defined as in Proposition~\ref{prop:der_tail}. 

\noindent By Proposition~\ref{prop:der_tail} there is $C > 0$, such that, for all $r > 0$,
\begin{align}
\notag \mathbb{E}|h(R) - \mathbb{E}h(R)|^r  \leq &  \left( \max \left\{\frac{1}{\sqrt{2}},  \frac{r-1}{2^{1/r}} \right\} \right)^r \left(  ( |S| (n-1) +1 ) C (\ln n)^2 \right)^{r/2} \\
\notag \leq &  C' n^{r/2} (\ln n)^r, 
\end{align}

\noindent where $C'>0$ is a function of $|S|$ and $r$. Finally, note that $g(X_1, \ldots, X_n) \overset{d}{=} h(R)$.
\end{proof}

\begin{rem} Note that the bound on the central moments also follows from using an exponential bounded difference inequality for Markov chains proved by Paulin~\cite{DP}. This holds for the general case when $X$ is a Markov chain (not necessarily time homogeneous), taking values in a Polish space $\Lambda = \Lambda_1 \times \cdots \times \Lambda_n$, with mixing time $\tau_{min}$. Then for any $ t \geq 0$, 
\begin{align}
\notag \mathbb{P}( | f(X) - \mathbb{E} [f(X)] | \geq t) \leq 2 \exp \left( \frac{- 2t^2}{||c^*||^2 \tau_{min}} \right),
\end{align}
\noindent where $f$ is such that
\begin{align}
\notag |f(x) - f(y)| \leq \sum_{i =1}^n c_i \mathbf{1}_{x_i \neq y_i},
\end{align}
\noindent for any $x, y \in \mathbb{R}^n$ and some $c^* = (c_1, \ldots, c_n) \in \mathbb{R}^n$, and where $||c^*||^2 = \sum_{i = 1}^n c_i^2$. 
\end{rem}

\subsection{Bounds on the variance terms in~\eqref{eq:wass} and~\eqref{eq:kol}}

Let $U \coloneqq  \sum_{ \emptyset \subseteq A \subsetneq[|R|]} k_{|R|, A} U_A /2$ for a general family of square-integrable random variables $U_A(R, R')$. From~\cite[Lemma 4.4]{C1}, 
\begin{align}
\notag \sqrt{Var(\mathbb{E}[U|R])} \leq & \frac{1}{2} \sum_{\emptyset \subseteq A \subsetneq[|R|]} \sqrt{Var(\mathbb{E}[ U_A | R])} \\
\notag \leq & \frac{1}{2} \sum_{\emptyset \subseteq  A \subsetneq[|R|]} \sqrt{\mathbb{E}[ Var(U_A | R')]}
\end{align}

\noindent As in~\cite{LRP} this inequality will be used for both $U_A = T_A(h)$ and $U_A = T_A'(h)$. A major difference from the setting in~\cite[Section 5]{LRP} is that the function $h$ is not symmetric, i.e., if $\sigma$ is a permutation of $\{ 0, \ldots, |S|(n-1)\}$, it is not necessarily the case that $h(R_0, \ldots, R_{|S|(n-1)}) = h(R_{\sigma(0)}, \ldots, R_{\sigma(|R|(n-1))})$.  Indeed, each variable in $R$ is associated with a transition at a particular step and from a particular state. Fix $A \subsetneq[|R|]$ and let $\tilde{R}$ be another independent copy of $R$. Introduce the substitution operator 
\begin{align}
\notag \tilde{S}_i(R) = (R_0, \ldots, \tilde{R}_i, \ldots, R_{|R|}).
\end{align}

\noindent Recall that from the Efron-Stein inequality, 
\begin{align}
\notag Var( U_A | R') \leq \frac{1}{2} \sum_{i = 0}^{|R|-1} \mathbb{E}[ ( \tilde{\Delta}_i U_A(R) )^2 | R'], 
\end{align}
\noindent where $\tilde{\Delta}_i U_A(R) = U_A(\tilde{S}_i(R)) - U_A(R)$.

\noindent Then,
\begin{align}
\label{eq:var_UA} \sqrt{Var( \mathbb{E}[ U| R] )} \leq \frac{1}{\sqrt{8}} \sum_{\emptyset \subseteq	 A \subsetneq [|R|]} k_{|R|, A} \sqrt{\sum_{i = 0}^{|R| -1} \mathbb{E}[ \tilde{\Delta}_i U_A]^2}.
\end{align}

\noindent Recall also that $U_A = \sum_{j \notin A} \Delta_j h(R) a (\Delta_j h(X^A))$, where the function $a$ is either the identity, or $a (\cdot) = | \cdot |$. Then
\begin{align}
\notag \sum_{i = 0}^{|R| -1}  \mathbb{E} [ \tilde{\Delta}_i U_A]^2  =   \sum_{i = 0}^{|R| - 1}  \sum_{j, k \notin A} & \mathbb{E} [| \tilde{\Delta}_i ( \Delta_j h(R) a (\Delta_j h(R^A)))| \\
\label{eq:57} & \times |\tilde{\Delta}_i ( \Delta_k h(R) a(\Delta_k h(R^A)) ) |].  
\end{align}

\noindent Fix $0 \leq i \leq |R| - 1$, and note that for $j \notin A$, 
\begin{align}
\notag  & \tilde{\Delta}_i ( \Delta_j h(R) - a( \Delta_j h(R^A)) ) \\
\label{eq:58} & \quad =  \tilde{\Delta}_i (\Delta_j h(R)) a(\Delta_j h(R^A) + \Delta_j h(\tilde{S}_i(R)) \tilde{\Delta}_i(a (\Delta_j h(R^A))).
\end{align}

\noindent Then, using  $|\tilde{\Delta}_i a( \cdot) | \leq |\tilde{\Delta}_i (\cdot)|$, the summands in~\eqref{eq:57} are bounded by
\begin{align}
\label{eq:511} 4 \sup_{Y, Y', Z, Z' } \mathbb{E} |\tilde{\Delta}_i ({\Delta}_j h(Y) ) {\Delta}_j h(Y') \tilde{\Delta}_i ({\Delta}_k h(Z) ) {\Delta}_k h(Z')|,
\end{align}
\noindent where $Y, Y', Z, Z'$ are recombinations of $R, R', \tilde{R}$, i.e., $Y_i \in \{R_i, R_i', \tilde{R}_i\}$, for $i \in [0, |R| -1]$.

\noindent Next, as in~\cite{LRP},  bound each type of summand appearing in~\eqref{eq:57}. 

\noindent If $i = j = k$ and using  $\tilde{\Delta}_i (\Delta_i (\cdot)) = \Delta_i (\cdot)$,~\eqref{eq:511} is bounded by
\begin{align}
\notag 4 \sup_{Y, Y', Z, Z' } \mathbb{E} | \Delta_i h(Y)  \Delta_i h(Y') \Delta_i h(Z)  \Delta_i h(Z')| \leq 4 \mathbb{E} |\Delta_i h(R)|^4.
\end{align}

\noindent If $i \neq j \neq k$, switch $\tilde{R}_i$ and $R_i'$, and $Y$ is still a recombination. Then~\eqref{eq:511} is equal to
\begin{align}
\notag   4  & \sup_{Y, Y', Z, Z' } \mathbb{E}[ {\Delta}_i ({\Delta}_j h(Y) ) {\Delta}_j h(Y') {\Delta}_i ({\Delta}_k h(Z) ) {\Delta}_k h(Z')] \\
\notag & \leq  \quad  4 \sup_{Y, Y', Z, Z'} \mathbb{E}[ \mathbf{1}_{\Delta_{i,j} h(Y) \neq 0} ( | \Delta_j h(Y) | + |\Delta_j h(Y^i)| ) | \Delta_j h(Y') | \\
\notag & \quad \times \mathbf{1}_{\Delta_{i,k} h(Z) \neq 0} ( | \Delta_k h(Z) | + | \Delta_k h(Z^i) | ) |\Delta_k h(Z')|] \\
\label{eq:ijk} & \leq  \quad 16 \sup_{Y, Y', Z, Z'} \mathbb{E} [\mathbf{1}_{\Delta_{i,j} h(Y) \neq 0, \Delta_{j,k} h(Y') \neq 0} |\Delta_j h(Z)|^2 |\Delta_k h(Z')|^2],
\end{align}

\noindent where the last step follows from the Cauchy-Schwarz inequality.

\noindent If $i \neq j = k$,~\eqref{eq:511} is equal to
\begin{align}
\notag  4  & \sup_{Y, Y', Z, Z'}  \mathbb{E} | \tilde{\Delta}_i (\Delta_j (h(Y)) \Delta_j h(Y') \tilde{\Delta}_i (\Delta_j (h(Z)) \Delta_j h(Z')| \\
\notag & =  \quad 4 \sup_{Y, Z}  \mathbb{E} | \tilde{\Delta}_i (\Delta_j (h(Y))^2 \Delta_j h(Z)^2 | \\
\notag & =  \quad 4 \sup_{Y, Z} \mathbb{E} | \Delta_j (\Delta_i (h(Y))^2 \Delta_j h(Z)^2| \\
\label{eq:i} & \leq  \quad 16 \sup_{Y, Z, Z'} \mathbb{E} | \mathbf{1}_{\Delta_{i,j} h(Y) \neq 0} \Delta_i h(Z)^2 \Delta_j h(Z')^2|,
\end{align}

\noindent where we have exchanged $\tilde{R}_i$ and $R_i'$ and used the Cauchy-Schwarz inequality as in~\eqref{eq:ijk}.

\noindent Similarly if $i = j \neq k$, the bound is 
\begin{align}
\notag  4 & \sup_{Y, Y', Z, Z'} \mathbb{E} | \tilde{\Delta}_i (\Delta_i (h(Y)) \Delta_i h(Y') \tilde{\Delta}_i (\Delta_k (h(Z)) \Delta_k h(Z')| \\
\notag & =  \quad 4 \sup_{Y, Y', Z, Z'}  \mathbb{E} | \Delta_i h(Y) \Delta_i h(Y') \Delta_i (\Delta_k (h(Z)) \Delta_k h(Z') |\\
\notag & =  \quad 4 \sup_{Y, Z, Z'} \mathbb{E} | \Delta_i h(Y)^2  \Delta_i (\Delta_k (h(Z)) \Delta_k h(Z') |\\
\label{eq:k} & \leq  \quad 8 \sup_{Y, Z, Z'} \mathbb{E} | \mathbf{1}_{\Delta_{i,k} h(Y) \neq 0} \Delta_i h(Z)^2 \Delta_k h(Z')^2|,
\end{align}

\noindent Finally, if $i = k \neq j$, the bound is by symmetry 
\begin{align}
\notag  4 & \sup_{Y, Y', Z, Z'} \mathbb{E} | \tilde{\Delta}_i (\Delta_j (h(Y)) \Delta_j h(Y') \tilde{\Delta}_i (\Delta_i (h(Z)) \Delta_i h(Z')| \\
\label{eq:j} &  \leq   \quad 8 \sup_{Y, Z, Z'} \mathbb{E} | \mathbf{1}_{\Delta_{i,j} h(Y) \neq 0} \Delta_i h(Z)^2 \Delta_j h(Z')^2|,
\end{align}

\noindent Combining~\eqref{eq:ijk},~\eqref{eq:i},~\eqref{eq:k} and~\eqref{eq:j} in~\eqref{eq:57} we finally get
\begin{align}
\notag  & \sum_{i = 0}^{|R| -1}   \mathbb{E} [ \tilde{\Delta}_i U_A]^2 \\
\notag &  \leq     16  \sum_{i = 0}^{|R| -1} \sum_{j, k \notin A}\bigg( \mathbf{1}_{i = j = k}  \mathbb{E} |\Delta_i h(R)|^4 + \mathbf{1}_{i \neq j \neq k} B_{|R|}(h)  \\
\notag & \quad \quad + (\mathbf{1}_{i \neq j = k} +\mathbf{1}_{i = k \neq j} ) B_{|R|}^{(k)}(h)  +  (\mathbf{1}_{i \neq j = k} + \mathbf{1}_{i = j \neq k} ) B_{|R|}^{(j)}(h) \bigg),
\end{align} 

\noindent where
\begin{align}
\notag B_{|R|}(h) & \coloneqq \sup_{Y, Y', Z, Z'} \mathbb{E} [\mathbf{1}_{\Delta_{i,j} h(Y) \neq 0, \Delta_{j,k} h(Y') \neq 0} |\Delta_j h(Z)|^2 |\Delta_k h(Z')|^2],\\ 
\notag B_{|R|}^{(k)}(h) & \coloneqq  \sup_{Y, Z, Z'} \mathbb{E} | \mathbf{1}_{\Delta_{i,k} h(Y) \neq 0} \Delta_i h(Z)^2 \Delta_k h(Z')^2|, \\
\notag B_{|R|}^{(j)} (h) &  \coloneqq  \sup_{Y, Z, Z'} \mathbb{E} | \mathbf{1}_{\Delta_{i,k} h(Y) \neq 0} \Delta_i h(Z)^2 \Delta_k h(Z')^2|.
\end{align}

\noindent Then~\eqref{eq:var_UA}, leads to a bound on the conditional variance $Var(\mathbb{E}[U|R])$, for $U = T_{|R|}(h)$ or $U = T_{|R|}'(h)$. 
\begin{prop}\label{prop:var_b} With the notation as above and for $U = T_{|R|}(h)$ or $U = T_{|R|}'(h)$:
\begin{align}
\notag & \sqrt{Var(\mathbb{E}[U|R])} \leq \frac{1}{\sqrt{2}} \sum_{\emptyset \subseteq A \subsetneq[|R|]} k_{|R|, A} \bigg(  \sum_{i = 0}^{|R| -1} \sum_{j, k \notin A}\bigg( \mathbf{1}_{i = j = k}  \mathbb{E} |\Delta_i h(R)|^4 + \mathbf{1}_{i \neq j \neq k} B_{|R|}(h) \\
\notag & \quad \quad \quad \quad  \quad \quad   \quad \quad +  (\mathbf{1}_{i \neq j = k} + \mathbf{1}_{i = k \neq j} ) B_{|R|}^{(k)}(h) +  (\mathbf{1}_{i \neq j = k} + \mathbf{1}_{i = j \neq k} ) B_{|R|}^{(j)}(h) \bigg) \bigg)^{1/2},
\end{align} 
\end{prop}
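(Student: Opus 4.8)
The plan is to assemble the case-by-case estimates derived above into the single bound of the proposition, the remaining work being essentially bookkeeping of the constants and the indicators. The starting point is the Efron--Stein based inequality~\eqref{eq:var_UA}, which reduces everything to controlling, for each fixed $A \subsetneq [|R|]$, the quantity $\sum_{i=0}^{|R|-1} \mathbb{E}[\tilde{\Delta}_i U_A]^2$. Writing $U_A = \sum_{j \notin A} \Delta_j h(R)\, a(\Delta_j h(R^A))$ and expanding the square produces the double sum over $j, k \notin A$ displayed in~\eqref{eq:57}, so that the whole problem is localized to bounding a single generic summand indexed by the triple $(i, j, k)$.

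First I would bound each such summand by~\eqref{eq:511}, using the discrete Leibniz rule~\eqref{eq:58} together with the contraction $|\tilde{\Delta}_i a(\cdot)| \le |\tilde{\Delta}_i(\cdot)|$ to pass from $a$ composed with a difference to the difference itself, and passing to a supremum over all recombinations $Y, Y', Z, Z'$ of $R, R', \tilde{R}$. Then I would split the sum over $(i,j,k)$ according to the five coincidence patterns $i=j=k$, $i \neq j \neq k$, $i \neq j = k$, $i = j \neq k$ and $i = k \neq j$, handling each pattern as in~\eqref{eq:ijk}--\eqref{eq:j}: the identity $\tilde{\Delta}_i(\Delta_i(\cdot)) = \Delta_i(\cdot)$ collapses the diagonal terms, exchanging the independent copies $\tilde{R}_i$ and $R_i'$ turns a second-order difference into the indicator $\mathbf{1}_{\Delta_{i,j} h \neq 0}$ multiplied by a first-order difference, and Cauchy--Schwarz decouples the two factors so that each contribution can be majorized by one of the three quantities $B_{|R|}(h)$, $B_{|R|}^{(k)}(h)$, $B_{|R|}^{(j)}(h)$, or by $\mathbb{E}|\Delta_i h(R)|^4$ on the diagonal.

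Collecting the five cases reproduces the bound on $\sum_i \mathbb{E}[\tilde{\Delta}_i U_A]^2$, with the common factor $16$ pulled out and the appropriate indicators attached. Substituting this into the square root in~\eqref{eq:var_UA} and combining the resulting $\sqrt{16}=4$ with the prefactor $1/\sqrt{8}$ gives the numerical constant in the statement. Since the argument never used any property of $a$ beyond the contraction inequality, the estimate holds verbatim for both choices $a = \mathrm{id}$ and $a = |\cdot|$, that is, for $U = T_{|R|}(h)$ and $U = T_{|R|}'(h)$ simultaneously.

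The step I expect to be most delicate is the case analysis for the mixed coincidence patterns, precisely because $h$ is not symmetric in its arguments. In the symmetric setting of~\cite{LRP} one may relabel coordinates freely, but here each $R_i$ encodes a specific transition at a specific step, so the swap of $\tilde{R}_i$ with $R_i'$ and the subsequent Cauchy--Schwarz decoupling must be carried out without appealing to permutation symmetry, and one must check that, after introducing the suprema over recombinations, the same three templates $B_{|R|}, B_{|R|}^{(k)}, B_{|R|}^{(j)}$ genuinely dominate every term. Verifying that the indicator $\mathbf{1}_{\Delta_{i,j}h \neq 0}$ produced by the exchange is correctly paired with the surviving squared differences is where the bookkeeping is easiest to get wrong.
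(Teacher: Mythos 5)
Your proposal follows the paper's own derivation essentially verbatim: the Efron--Stein reduction~\eqref{eq:var_UA}, the expansion~\eqref{eq:57}, the product rule~\eqref{eq:58} with the contraction $|\tilde{\Delta}_i a(\cdot)|\le|\tilde{\Delta}_i(\cdot)|$, the five-way case analysis~\eqref{eq:ijk}--\eqref{eq:j} via swapping $\tilde{R}_i$ with $R_i'$ and Cauchy--Schwarz, and the final substitution. The only caveat is the numerical prefactor: combining $\sqrt{16}=4$ with $1/\sqrt{8}$ yields $\sqrt{2}$ rather than the $1/\sqrt{2}$ stated in the proposition (the discrepancy traces to the paper's normalization $U=\sum_A k_{|R|,A}U_A/2$ versus $T_{|R|}(h)=\sum_A k_{|R|,A}T_A(h)$), so your claim that the arithmetic ``gives the numerical constant in the statement'' should be checked against that factor of $2$; it is immaterial for the applications.
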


\noindent Note again that function $h$ is not symmetric and therefore the expression above cannot be simplified further in contrast to the case in~\cite{LRP}.

\section{Applications}\label{s:applications}

Although our framework was initially motivated by~\cite{HK} and finding a normal approximation result for the length of the longest common subsequences in dependent random words, some applications to stochastic geometry are presented below. Our methodology can be applied to other related settings, in particular the variant of the occupancy problem introduced in the recent article~\cite{gkp-2020} (see Remark~\ref{rem:occupancy}).

\subsection{Covering process}

Let $(K, \mathcal{K})$ be the space of compact subsets of $\mathbb{R}^d$, endowed with the hit-and-miss topology. Let $E_n$ be a cube of volume $n$, and $C_1, \ldots, C_n$ be random variables in $E_n$ called \emph{germs}. In the iid setting of~\cite{LRP} each $C_i$ is sampled uniformly and independently in $E_n$, i.e., if $T \subset E_n$ with measure $|T|$,
\begin{align}
\notag \mathbb{P}(C_i \in T) = \frac{|T|}{n},
\end{align} 
\noindent for all $i \in \{1, \ldots, n\}$.

\noindent Here, we consider $C_1, \ldots, C_n$, generated by a hidden Markov model in the following way. Let $Z_1, \ldots, Z_n$ be an aperiodic irreducible Markov chain on a finite state space $\mathcal{S}$. Each $s \in \mathcal{S}$ is associated with a measure $m_s$ on $E_n$. Then for each measurable $T \subseteq E_n$, 
\begin{align}
\notag \mathbb{P} (C_i \in T | Z_i = s) = m_s(T).
\end{align}  
\noindent Assume that there are constants $0 < c_m \leq c_M$ such that for any $s \in \mathcal{S}$ and measurable $T \subseteq E_n$, 
\begin{align}
\notag \frac{c_m |T|}{n} \leq m_s(T) \leq \frac{c_M |T|}{n}.
\end{align}
\noindent Note that $c_m = c_M = 1$ recovers the setting of~\cite{LRP}.

\noindent  Let $K_1, \ldots, K_n$ be  compact sets (\emph{grains}) with $Vol (K_i) \in (V_1, V_2)$ (absolute constants) for $i = 1,\ldots, n$.  Let $X_i = C_i + K_i$ for $i = 1, \ldots, n$ be the \emph{germ-grain} process. Consider the closed set formed by the union of the grains translated by the grain
\begin{align}
\notag F_n = \left( \bigcup_{k =1}^n X_K\right) \cap E_n.
\end{align}  

\noindent We are interested in the volume covered by $F_n$, 
\begin{align}
\notag f_V(X_1, \ldots, X_n) = Vol(F_n),
\end{align}

\noindent and the number of isolated grains
\begin{align}
\notag f_I(X_1, \ldots, X_n) = \# \{ k : X_k \cap X_j \cap E_n = \emptyset, k \neq j \}.
\end{align}

\begin{thm}\label{thm:cov} Let $\mathcal{N}$ be a standard normal random variable. Then, for all $n \in \mathbb{N}$,
\begin{align}
\label{eq:thm_cov_vol} d_K\left(\frac{f_V - \mathbb{E} f_V}{\sqrt{Var f_V}} , \mathcal{N} \right) \leq C \left( \frac{ n(\ln n)^3}  {\sqrt{Var(f_V)^3}} +   \frac{ n^{1/2} (\ln n)^4}{ Var(f_V)} \right),  \\ 
\label{eq:thm_cov_iso} d_K\left(\frac{f_I - \mathbb{E} f_I}{\sqrt{Var f_I}} , \mathcal{N} \right) \leq C \left( \frac{ n(\ln n)^3}  {\sqrt{Var(f_I)^3}} +   \frac{n^{1/2} (\ln n)^4} {Var(f_I)}\right),
\end{align}
\noindent for some constant $C > 0$, independent of $n$.
\end{thm}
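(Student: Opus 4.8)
The plan is to apply the Kolmogorov bound \eqref{eq:kol} to $W = f_V - \mathbb{E} f_V$ with $\sigma^2 = Var(f_V)$ (and analogously for $f_I$), and to control each of the four terms on its right-hand side using the moment estimate \eqref{eq:der_exp} of Proposition~\ref{prop:der_tail} together with a geometric localization argument. Since $\sigma^{-1} W = (f_V - \mathbb{E} f_V)/\sqrt{Var(f_V)}$ is exactly the quantity appearing in \eqref{eq:thm_cov_vol}, the theorem will follow once the two moment sums are shown to be $O(n (\ln n)^3)$ and the two conditional-variance terms $O(\sqrt{n}\,(\ln n)^4)$.

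First I would verify the bounded-difference hypothesis of Proposition~\ref{prop:der_tail}. For $f_V$ this is immediate: relocating a single germ-grain $X_i$ alters $Vol(F_n)$ by at most $Vol(K_i) \le V_2$, so $|f_V(x) - f_V(y)| \le V_2 \sum_{i=1}^n \mathbf{1}_{x_i \ne y_i}$, and Proposition~\ref{prop:der_tail} then yields $\mathbb{E}|\Delta_i h(R)|^r \le \tilde C (\ln n)^r$ for every $r>0$. For $f_I$ the situation is more delicate, since moving one grain can change the isolation status of every grain it meets, and the number of such grains is not bounded uniformly in $n$. Here I would instead bound $|\Delta_i h|$ by the cascade length $|B_i|$ (the block of consecutive germs altered by the instruction $R_i$, whose tail is governed by \eqref{eq:s_decay}) times the local overlap degree, and control the latter's moments through the density assumption $m_s(T) \le c_M |T|/n$, which forces the expected number of grains meeting a fixed grain to be $O(1)$. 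This gives the same estimate $\mathbb{E}|\Delta_i h(R)|^r = O((\ln n)^r)$ as for $f_V$.

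With the per-coordinate moment bounds in hand, the two moment sums in \eqref{eq:kol} are straightforward: since $|R| = |\mathcal{S}|(n-1)+1 = O(n)$, one has $\sum_{j} \mathbb{E}|\Delta_j h|^3 \le |R|\,\tilde C(\ln n)^3 = O(n(\ln n)^3)$ and likewise $\sum_j \sqrt{\mathbb{E}|\Delta_j h|^6} = O(n(\ln n)^3)$; dividing by $\sigma^3 = \sqrt{Var(f_V)^3}$ produces the first term of \eqref{eq:thm_cov_vol}. The conditional-variance terms are handled through Proposition~\ref{prop:var_b}. There the moment factors $|\Delta_j h|^2 |\Delta_k h|^2$ are again $O((\ln n)^4)$ by \eqref{eq:der_exp}, while the indicators $\mathbf{1}_{\Delta_{i,j} h(Y) \ne 0}$ provide localization: the mixed second difference vanishes unless the germ-blocks attached to instructions $i$ and $j$ contain overlapping grains. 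Using that two given grains overlap with probability $O(1/n)$ (again from $m_s(T) \le c_M|T|/n$ and $Vol(K_i)\le V_2$) together with the block-length tail \eqref{eq:s_decay}, I would bound the number of interacting triples $(i,j,k)$ by $O(n)$ up to a polylogarithmic factor, so that the inner sum in Proposition~\ref{prop:var_b} is $O(n (\ln n)^6)$. Combined with $\sum_{\emptyset \subseteq A \subsetneq [|R|]} k_{|R|,A} = O(\ln n)$, this yields $\sqrt{Var(\mathbb{E}[T_{|R|}(h)\mid R])} = O(\sqrt{n}\,(\ln n)^4)$, and dividing by $\sigma^2 = Var(f_V)$ gives the second term of \eqref{eq:thm_cov_vol}. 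The bound \eqref{eq:thm_cov_iso} for $f_I$ follows identically, with ``overlap'' replaced by ``meeting a grain whose isolation count changes''.

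The main obstacle is the localization count in the conditional-variance step. In the independent setting of~\cite{LRP} each coordinate is a single germ, so $\Delta_{i,j} h \ne 0$ forces grains $i$ and $j$ to overlap and the expected number of interacting pairs per index is $O(1)$. In the hidden Markov setting a single instruction $R_i$ controls a whole cascade-block $B_i$ of consecutive germs, so the interaction couples the random block lengths with the random germ positions; the delicate point is to decouple these --- via the exponential tail \eqref{eq:s_decay} on block lengths and a union bound over germ pairs weighted by the overlap probability $O(1/n)$ --- while tracking the logarithmic factors carefully enough that the total conditional-variance term comes out as $O(\sqrt{n}\,(\ln n)^4)$ rather than a higher power of $\ln n$.
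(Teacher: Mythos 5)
Your proposal is correct and follows essentially the same route as the paper: reduce to the abstract bound of Proposition~\ref{thm:wass_hmm}, use the Lipschitz property (constant $V_2$) with Proposition~\ref{prop:der_tail} for $f_V$, handle $f_I$ via cascade-length tails combined with the overlap probability $O(1/n)$ coming from $m_s(T)\le c_M|T|/n$, and localize the conditional-variance terms in Proposition~\ref{prop:var_b} through the indicators $\mathbf{1}_{\Delta_{i,j}h\neq 0}$ together with the deterministic case split on $|i-j|,|j-k|$. The only step you leave under-specified is the higher-moment bound $\mathbb{E}|\Delta_i h|^t=O((\ln n)^t)$ for the non-Lipschitz $f_I$, which the paper carries out by expanding the $t$-th power into products of overlap indicators, conditioning on the hidden chain for independence, and extracting a cycle-free (spanning-tree) subgraph to obtain the correct power of $1/n$; your ``local overlap degree'' heuristic needs exactly this combinatorial bookkeeping to go through.
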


 The study of the order of growth of $Var f_I$ and $Var f_V$ is not really the scope of the current paper.  In the independent case, there are constants $0 < c_V \leq C_V$, such that $c_V n \leq Var f_V \leq C_V n$, and  $c_V n \leq Var f_I \leq C_V n $, for $n$ sufficiently large (see~\cite[Theorem 4.4]{KM}). In our dependent setting a variance lower bound of order $n$ will thus provide a rate of order $(\log n)^4/\sqrt{n}$.

\begin{proof}
\noindent Write $f_V(X_1, \ldots, X_n) = h(R_0, \ldots, R_{|\mathcal{S}|(n-1)})$ for a set of instructions $R$ defined as in Section~\ref{s:construction}. The volume of each grain is bounded by $V_2$, so  $f_V$ is Lipschitz with constant $V_2$.  Proposition~\ref{thm:wass_hmm} holds, and from Proposition~\ref{prop:der_tail}, the non-variance terms in the bounds in Proposition~\ref{thm:wass_hmm} are bounded by  $C (\ln n)^3 / \sqrt{n}$. Here and below, $C$ is a constant, independent of $n$, which can vary from line to line. Indeed, for instance,
\begin{align}
\notag \frac{1}{4 \sigma^3} \sum_{j = 0}^{|R| - 1} \sqrt{ \mathbb{E} |\Delta_j h(R)|^6} \leq & \quad  C  Var(f_V)^{-3/2} (|S|(n-1) + 1) (\ln n )^3  \\
\label{eq:cov_mom}  \leq & \quad C n(\ln n)^3  / Var(f_V)^{3/2}.
\end{align}

\noindent To analyze the bound on the variance terms given by Proposition~\ref{prop:var_b} first note that
\begin{align}
\notag \sum_{i = 0}^{|R| -1} \sum_{j, k \notin A} \mathbf{1}_{i = j = k} \mathbb{E} | \Delta_i h(R)|^4 \leq Cn (\ln n)^4,
\end{align}

\noindent using Proposition~\ref{prop:der_tail}.

\noindent Next, we analyze
\begin{align}
\label{eq:brh_fv} B_{|R|}(h) & \coloneqq \sup_{Y, Y', Z, Z'} \mathbb{E} [\mathbf{1}_{\Delta_{i,j} h(Y) \neq 0, \Delta_{j,k} h(Y') \neq 0} |\Delta_j h(Z)|^2 |\Delta_k h(Z')|^2].
\end{align}

\noindent Let $E$ be the event that at least one of the  perturbations of the instructions in~\eqref{eq:brh_fv}  yields a difference in more than $K$ points. By Proposition~\ref{prop:der_tail}, there is $\ep > 0$, such that $\mathbb{P}( E) \leq (1 - \ep)^K$. Then, by the Lipschitz properties of $h$, 
\begin{align}
\notag    \mathbb{E} & [\mathbf{1}_{\Delta_{i,j} h(Y) \neq 0, \Delta_{j,k} h(Y') \neq 0} |\Delta_j h(Z)|^2 |\Delta_k h(Z')|^2] \\
\notag  & =  \quad  \mathbb{E} [\mathbf{1}_{\Delta_{i,j} h(Y) \neq 0, \Delta_{j,k} h(Y') \neq 0} |\Delta_j h(Z)|^2 |\Delta_k h(Z')|^2 \mathbf{1}_{E}]  \\ 
\notag & \quad  \quad  + \mathbb{E} [\mathbf{1}_{\Delta_{i,j} h(Y) \neq 0, \Delta_{j,k} h(Y') \neq 0} |\Delta_j h(Z)|^2 |\Delta_k h(Z')|^2 \mathbf{1}_{E^c}] \\
\notag &  \leq  \quad  \mathbb{E} [\mathbf{1}_{\Delta_{i,j} h(Y) \neq 0, \Delta_{j,k} h(Y') \neq 0} |\Delta_j h(Z)|^2 |\Delta_k h(Z')|^2 \mathbf{1}_{E^c}]  + Cn^4 (1- \ep)^K \\
\label{eq:brh_fv_1} & \leq    \quad  CK^4 \mathbb{E} [ \mathbf{1}_{\Delta_{i,j} h(Y) \neq 0, \Delta_{j,k} h(Y') \neq 0}  \mathbf{1}_{E^c}]  + Cn^4 (1- \ep)^K.
\end{align}

\noindent If $S(Y)$ is the set of points generated by the instructions $Y$ and $S(Y^i)$ - the set of points generated by $Y$ after the perturbation of $Y_i$, let 
\begin{align}
\notag S_1 \coloneqq & S(Y) \Delta S(Y^i),
\end{align}
\noindent where $\Delta$ is the symmetric difference operator. Similarly, let
\begin{align}
\notag S_2 \coloneqq & S(Y) \Delta S(Y^j), \\
\notag S_3 \coloneqq & S(Y') \Delta S((Y')^i), \\
\notag S_4 \coloneqq & S(Y') \Delta S((Y')^j).
\end{align}
\noindent Note that, conditioned on $E^c$, $|S_i| \leq 2K$ for $i = 1, 2,3, 4$. Furthermore, if $s_1 \cap s_2 = \emptyset$, for all $(s_1, s_2) \in (S_1, S_2)$, then $\Delta_{i, j} h(Y) = 0$. Then
\begin{align}
\notag \mathbf{1}_{\Delta_{i,j} h(Y)} \leq \sum_{(s_1, s_2) \in (S_1, S_2) } \mathbf{1}_{s_1 \cap s_2 \neq \emptyset}.
\end{align}
\noindent This bound is meaningful if the sets $S_1$ and $S_2$ are disjoint sets of random variables. Conditioned on $E^c$, this is the case if $|i - j| \geq |R|K$. We introduce events $E_1, E_2$ and $E_3$ corresponding to $0, 1$, or $2$ of the conditions $\{ |i - j| \leq |R| K, |j- k| \leq |R| K\}$ holding, respectively. The events $E_1, E_2,$ and $E_3$ are deterministic. Then, 
\begin{align}
\notag  \mathbb{E}&  [ \mathbf{1}_{\Delta_{i,j} h(Y) \neq 0, \Delta_{j,k} h(Y') \neq 0}  \mathbf{1}_{E^c}] = \mathbb{E} [ \mathbf{1}_{\Delta_{i,j} h(Y) \neq 0, \Delta_{j,k} h(Y') \neq 0}  \mathbf{1}_{E^c} ( \mathbf{1}_{E_1} + \mathbf{1}_{E_2} + \mathbf{1}_{E_3})] 
\end{align} 

\noindent First, we use the trivial bound $\mathbf{1}_{\Delta_{i,j} h(Y) \neq 0, \Delta_{j,k} h(Y') \neq 0}  \leq 1$, to get
\begin{align}
\label{eq:brh_fv_e1}  \mathbb{E}&  [ \mathbf{1}_{\Delta_{i,j} h(Y) \neq 0, \Delta_{j,k} h(Y') \neq 0}  \mathbf{1}_{E^c} \mathbf{1}_{E_1} ] \leq \mathbf{1}_{E_1}.
\end{align} 

\noindent Then, for the term with $\mathbf{1}_{E_3}$, 
\begin{align}
\notag  \mathbb{E}&  [ \mathbf{1}_{\Delta_{i,j} h(Y) \neq 0, \Delta_{j,k} h(Y') \neq 0}  \mathbf{1}_{E^c} \mathbf{1}_{E_3} ]   \leq  \mathbf{1}_{E_3} \mathbb{E} \left[ \sum_{(s_1, s_2) \in (S_1, S_2) } \sum_{(s_3, s_4) \in (S_3, S_4) } \mathbf{1}_{s_1 \cap s_2 \neq \emptyset, s_3 \cap s_4 \neq \emptyset}\right].
\end{align} 

\noindent To bound $\mathbb{E} [ \mathbf{1}_{s_1 \cap s_2 \neq \emptyset, s_3 \cap s_4 \neq \emptyset}]$, condition on $s_2, s_3$ and the values of all hidden variables $H$. Then, since $S_1$ and $S_4$ are disjoint we have independence, 
\begin{align}
\notag \mathbb{E}  [ \mathbf{1}_{s_1 \cap s_2 \neq \emptyset, s_3 \cap s_4 \neq \emptyset}]  = &  \mathbb{E} [ \mathbb{E} [ \mathbf{1}_{s_1 \cap s_2 \neq \emptyset, s_3 \cap s_4 \neq \emptyset} | s_2, s_3, H]] \\
\notag = &  \mathbb{E} [ \mathbb{E} [ \mathbf{1}_{s_1 \cap s_2 \neq \emptyset} |s_2, s_3, H] \mathbb{E} [ \mathbf{1}_{s_1 \cap s_2 \neq \emptyset} | s_2, s_3, H] ] \\
\notag \leq & \left(\frac{c_M V_2}{n}\right)^2.
\end{align}

\noindent Therefore, 
\begin{align}
\label{eq:brh_fv_e3}  \mathbb{E}&  [ \mathbf{1}_{\Delta_{i,j} h(Y) \neq 0, \Delta_{j,k} h(Y') \neq 0}  \mathbf{1}_{E^c} \mathbf{1}_{E_3} ] \leq \mathbf{1}_{E_3} C K^4/n^2,
\end{align} 
\noindent for some $C > 0$, independent of $K$ and $n$, and where we have used that $|S_i| \leq 2K$ for $i = 1,2,3,4$.

\noindent Finally, for the term with $E_2$, we may assume that $|i - j| \geq |R|K$, since the case $|j - k| \geq |R|K$ is identical. Write, using the trivial bound on $\mathbf{1}_{\Delta_{j,k} h(Y')} \neq 0$, 
\begin{align}
\notag  \mathbb{E}&  [ \mathbf{1}_{\Delta_{i,j} h(Y) \neq 0, \Delta_{j,k} h(Y') \neq 0}  \mathbf{1}_{E^c} \mathbf{1}_{E_2} ]   \leq  \mathbf{1}_{E_3} \mathbb{E} \left[ \sum_{(s_1, s_2) \in (S_1, S_2) }  \mathbf{1}_{s_1 \cap s_2 \neq \emptyset}\right].
\end{align} 
\noindent Next, as before, 
\begin{align}
\notag \mathbb{E} [  \mathbf{1}_{s_1 \cap s_2 \neq \emptyset}] = \mathbb{E}[ \mathbb{E} [  \mathbf{1}_{s_1 \cap s_2 \neq \emptyset} | s_2, H]] \leq \frac{c_M V_2}{n}. 
\end{align} 
\noindent Then, 
\begin{align}
\label{eq:brh_fv_e2}  \mathbb{E}&  [ \mathbf{1}_{\Delta_{i,j} h(Y) \neq 0, \Delta_{j,k} h(Y') \neq 0}  \mathbf{1}_{E^c} \mathbf{1}_{E_2} ] \leq \mathbf{1}_{E_2} C K^2/n,
\end{align} 
\noindent Then, combining~\eqref{eq:brh_fv_1},~\eqref{eq:brh_fv_e1}, ~\eqref{eq:brh_fv_e2} and ~\eqref{eq:brh_fv_e3}, we get the following bound of~\eqref{eq:brh_fv}, 
\begin{align}
\notag B_{|R|}(h) \leq C( \mathbf{1}_{E_1} K^4 + \mathbf{1}_{E_2} K^6/n + \mathbf{1}_{E_3} K^8/n^2 + n^4 (1 - \ep)^K).
\end{align}   

\noindent Then, 
\begin{align}
\notag  \sum_{i = 0}^{|R| -1} & \sum_{j, k \notin A}  \mathbf{1}_{i \neq j \neq k} B_{|R|}(h) \\
\notag \leq & \quad  C( n K^5 + n^2 K^7 /n + n^3 K^8 /n^2 + n^7 (1 - \ep)^K ) \\
\notag \leq & \quad Cn (\ln n)^8, 
\end{align}
\noindent when we choose $K = c \ln n$ for a suitable $ c> 0$, independent of $n$.

\noindent Similarly, 
\begin{align}
\notag B_{|R|}^{(k)}(h) \leq C (\ln n)^4/n,\\
\notag B_{|R|}^{(j)}(h) \leq C (\ln n)^4/n.
\end{align}

\noindent and
\begin{align}
\notag  \sum_{i = 0}^{|R| -1} \sum_{j, k \notin A} (\mathbf{1}_{i \neq j = k} + \mathbf{1}_{i = k \neq j} ) B_{|R|}^{(k)}(h)  \leq Cn^2 (\ln n)^4/n = Cn (\ln n)^4, \\
\notag  \sum_{i = 0}^{|R| -1} \sum_{j, k \notin A}  (\mathbf{1}_{i \neq j = k} + \mathbf{1}_{i = j \neq k} ) B_{|R|}^{(j)}(h) \leq Cn^2 (\ln n)^4/n = Cn (\ln n)^4.
\end{align}

\noindent The bounds on the variance terms in Proposition~\ref{prop:var_b} become
\begin{align}
\notag  \sqrt{Var(\mathbb{E}[U|R])} \leq & \frac{1}{\sqrt{2}} \sum_{A \subsetneq[|R|]} k_{|R|, A} \bigg(  C n (\ln n)^4 + Cn (\ln n)^8 + 2 C n (\ln n)^4 \bigg)^{1/2}\\
\label{eq:cov_var_fv} \leq  & C \sqrt{n} (\ln n)^4.
\end{align}

\noindent Then,~\eqref{eq:thm_cov_vol} follows from~\eqref{eq:cov_var_fv},~\eqref{eq:cov_mom} and Theorem~\ref{thm:wass_hmm}.

\noindent The proof of~\eqref{eq:thm_cov_iso} is more involved since the function $f_I$ is not Lipschitz. Write, abusing notation, $f_I(X_1, \ldots, X_n) = h(R_0, \ldots, R_{|S|(n-1)})$ for a set of instructions $R$ as in Section~\ref{s:construction}. Proposition~\ref{thm:wass_hmm} holds and, like our analysis for $f_V$, we proceed by estimating the non-variance terms in the bounds. We first prove that, for any $t = 1, 2, \ldots$ and $i \in \{0, \ldots, |S|(n-1)\}$, 
\begin{align}
\label{eq:per_fi} \mathbb{E} |\Delta_i h |^t  \leq C (\ln n)^t, 
\end{align}
\noindent where $C = C(t) > 0$.

\noindent As in the proof of Proposition~\ref{prop:der_tail}, the sequence of instructions $R^i$ may give rise to a different realization $(Z', X')$. Indeed, if instruction $R_i$ determines $(Z_j, X_j)$ and $R_i'$ determines $(Z_j', X_j')$, it is possible that $(Z_j, X_j) \neq (Z_j', X_j')$. Let $s \geq 0$ be the smallest integer (possibly $s = \infty$) such that $Z_{j + s} = Z_{j+s}'$. Then, as in~\eqref{eq:s_decay}, there is $\ep > 0$, such that for $K \in \mathbb{N}$, 
\begin{align}
\notag \mathbb{P}(s \geq K) \leq (1 - \ep)^K.
\end{align}
\noindent Fix $K$, and let $E$ be the event, corresponding to $\{s \geq K\}$. Using the trivial bound $|h(R)| \leq n$, and thus $|\Delta_i h(R)| \leq 2n$,
\begin{align}
\notag  \mathbb{E} |\Delta_i h |^t  = & \quad  \mathbb{E} [ |\Delta_i h |^t \mathbf{1}_E] +  \mathbb{E} [|\Delta_i h |^t \mathbf{1}_{E^c} ] \\
\label{eq:per_fi_1} \leq & \quad (2n)^t (1 - \ep)^K + \mathbb{E} [|\Delta_i h |^t \mathbf{1}_{E^c} ]. 
\end{align}
\noindent Let $S(R)$ be the set of points generated by the sequence of instructions $R$, and $S(R^j)$ - be the points generated by $R$ after the perturbation of $R_j$. Set $S = S(R) \Delta S(R^j)$ for the symmetric difference and $S^c = S(R) \cap S(R^j)$. Note that $E^c$ implies that $|S| \leq 2K$. Furthermore, 
\begin{align}
\notag |\Delta_i h| \leq \sum_{s \in S} \sum_{x \in S^c} \mathbf{1}_{s \cap x \neq \emptyset},
\end{align}
\noindent and 
\begin{align}
\notag |\Delta_i h|^t \leq \sum_{(s_1, \ldots, s_t) \in S^t} \sum_{(x_1, \ldots, x_t) \in (S^c)^t}  \prod_{j, \ell = 1}^t \mathbf{1}_{s_j \cap x_{\ell} \neq \emptyset},
\end{align}
\noindent To estimate~\eqref{eq:per_fi_1}, we need to evaluate $\mathbb{E}[ \prod_{j, \ell = 1}^t \mathbf{1}_{s_j \cap x_{\ell} \neq \emptyset}]$, and to do so we proceed as in~\cite{LRP}  by studying the shape of the relations of $(s_j, x_{\ell})_{j, \ell \in \{1, \ldots, t\}}$. 

\noindent Identify the set $(s_j, x_{\ell})_{j, \ell \in \{1, \ldots, t\}}$ with the edges of the graph $G$, whose vertices correspond to $(s_j)_{j \in \{1, \ldots, t\}}$ and $(x_{\ell})_{ \ell \in \{1, \ldots, t\}}$. In particular, if $s_{j_1} = s_{j_2}$, for some $j_1 \neq j_2$, we identify them with the same point in the graph $G$. Conditioned on the realization of the hidden chain $Z$, we have independence. Then,  if $G$ is a tree, fix a root and condition recursively on vertices at different distances from the root. By  the restrictions on the volume of the grain and the sampling distribution, 
\begin{align}
\notag  \mathbb{E}\left[ \prod_{j, \ell = 1}^t \mathbf{1}_{s_j \cap x_{\ell} \neq \emptyset} \bigg| Z = z^n\right] \leq \left(\frac{c_M V_2}{n}\right)^{|E(G)|},
\end{align}
\noindent where $|E(G)|$ is the number of edges in the graph $G$. Furthermore, 
\begin{align}
\notag  \mathbb{E}\left[ \prod_{j, \ell = 1}^t \mathbf{1}_{s_j \cap x_{\ell} \neq \emptyset} \right] \leq \left(\frac{c_M V_2}{n}\right)^{|E(G)|}.
\end{align}
\noindent Note that the same result holds if $G$ is a graph without cycles, i.e., a collection of disjoint trees.
\noindent In general, $G$ might have cycles. Let $T$ be a subgraph of $G$ that contains no cycles. Then, 
\begin{align}
\notag   \prod_{j, \ell = 1}^t \mathbf{1}_{s_j \cap x_{\ell} \neq \emptyset} \leq \prod_{e = (e_1,e_2) \in E(T)} \mathbf{1}_{e_1 \cap e_2 \neq \emptyset},
\end{align}
\noindent where the product on the right-hand side runs over the edges $e = (e_1, e_2)$ of the graph $T$, with $e_1 \in S$ and $e_2 \in S^c$. 
\noindent Let $|s|$ be the number of distinct vertices in $(s_1, \ldots, s_t)$, and similarly let $|x|$ be the number for $(x_1, \ldots, x_t)$. The graph $G$ is complete bipartite with $|s| + |x|$ vertices. We can find a subgraph $T$ of $G$, also with $|s| + |x|$ vertices and no cycles. Then, 
\begin{align}
\notag \mathbb{E} [ |\Delta_i h|^t \mathbf{1}_{E^c} ] \leq & \quad \mathbb{E} \left[  \mathbf{1}_E^c \sum_{(s_1, \ldots, s_t) \in S^t} \sum_{(x_1, \ldots, x_t) \in (S^c)^t}  \prod_{j, \ell = 1}^t \mathbf{1}_{s_j \cap x_{\ell} \neq \emptyset}\right] \\
\notag  = & \quad \mathbb{E} \left[  \mathbf{1}_E^c \sum_{a, b = 1}^t \sum_{ \substack{(s_1, \ldots, s_t) \in S^t,\\ |s| = a}} \sum_{\substack{(x_1, \ldots, x_t) \in (S^c)^t,\\ |x| = b}}  \prod_{j, \ell = 1}^t \mathbf{1}_{s_j \cap x_{\ell} \neq \emptyset}\right] \\
\notag \leq & \quad \mathbb{E} \left[ \mathbf{1}_E^c \sum_{a, b = 1}^t  C_t |S|^a |S^c|^b \left(\frac{c_M V_2}{n}\right)^{a + b -1} \right] \\
\notag \leq & \quad C_t K^r,
\end{align}
\noindent where $C_t > 0$ is a constant depending on $t$, and where we have used that $|S| \leq 2K$ and $|S^c| \leq 2n$.

\noindent Letting $K = c\ln n$, for a suitable $c > 0$,~\eqref{eq:per_fi_1} implies~\eqref{eq:per_fi} as desired. Therefore, for the non-variance term in Proposition~\ref{thm:wass_hmm}, we have
\begin{align}
\label{eq:noncov_fi} \frac{1}{4 \sigma^3} \sum_{j = 0}^{|R| - 1} \sqrt{\mathbb{E} | \Delta_j h(R)|^6} + \frac{\sqrt{2 \pi} } {16 \sigma^3} \sum_{j = 0}^{|R| - 1} \mathbb{E} |\Delta_j h(R)|^3\leq C n \left( \frac{\ln n }{  \sqrt{ Var(f_I) } }\right)^3.
\end{align}  

\noindent We are left to analyze the bound on the variance terms given by Proposition~\ref{prop:var_b}. First, note that
\begin{align}
\notag \sum_{i = 0}^{|R| -1} \sum_{j, k \notin A} \mathbf{1}_{i = j = k} \mathbb{E} | \Delta_i h(R)|^4 \leq Cn (\ln n)^4,
\end{align}

\noindent Next, we analyze
\begin{align}
\label{eq:brh_fi} B_{|R|}(h) \coloneqq \sup_{Y, Y', Z, Z'} \mathbb{E} [ \mathbf{1}_{\Delta_{i,j} h(Y) \neq 0,\Delta_{j,k} h(Y') \neq 0} |\Delta_j h(Z)|^2 |\Delta_k h(Z')|^2 ], 
\end{align}
\noindent where the supremum is taken over recombinations $Y, Y', Z, Z'$ of $R, R', \tilde{R}$.
\noindent As before, let $E$ be the event that all perturbations of instructions in~\eqref{eq:brh_fi} propagate at most $K$ levels. We have that $\mathbb{P}(E^c) \leq (1 - \ep)^K$, for some $\ep \in (0,1)$. Using the trivial bound $|h(Y)| \leq n$, 
\begin{align}
\notag  \mathbb{E} & [ \mathbf{1}_{\Delta_{i,j} h(Y) \neq 0,\Delta_{j,k} h(Y') \neq 0} |\Delta_j h(Z)|^2 |\Delta_k h(Z')|^2 ] \\
\notag = & \quad  \mathbb{E} [ \mathbf{1}_{\Delta_{i,j} h(Y) \neq 0,\Delta_{j,k} h(Y') \neq 0} |\Delta_j h(Z)|^2 |\Delta_k h(Z')|^2 \mathbf{1}_E] \\
\notag & +  \mathbb{E} [ \mathbf{1}_{\Delta_{i,j} h(Y) \neq 0,\Delta_{j,k} h(Y') \neq 0} |\Delta_j h(Z)|^2 |\Delta_k h(Z')|^2 \mathbf{1}_{E^c}]\\
\label{eq:brh_fi_1} \leq & \quad  \mathbb{E} [ \mathbf{1}_{\Delta_{i,j} h(Y) \neq 0,\Delta_{j,k} h(Y') \neq 0} |\Delta_j h(Z)|^2 |\Delta_k h(Z')|^2 \mathbf{1}_E] + 4n^4 (1 -\ep)^K.
\end{align}

\noindent Let $S(Y^i)$ be the set of points generated by the sequence of instructions $Y$ after the perturbation of $Y_i$. Let $S$ be the set of all points in the expectation above, and furthermore let
\begin{align}
\notag  S_1 \coloneqq & S(Y) \Delta S(Y^i), \quad S_2 \coloneqq  S(Y)  \Delta S(Y^j),  \\
\notag S_3 \coloneqq & S(Y') \Delta S((Y')^j), \quad S_4 \coloneqq  S(Y') \Delta S((Y')^k), \\
\notag S_5 \coloneqq & S(Z) \Delta S(Z^j ), \quad  S_6 \coloneqq  S(Z') \Delta S(Z^k),  
\end{align}
\noindent where $\Delta$ is the symmetric difference operator. Conditioned on $E$, $|S_i| \leq 2K$, for $i = 1, \ldots, 6$ and $ |S| \leq 10n$.  

\noindent Conditioned on $E$, if $j - i \leq |R| K$, the perturbation in $i$ might be propagating past the position, corresponding to instruction $j$, leading to difficulties in the analysis of $\Delta_{i,j} h(Y)$. This is why, we condition further on the events $E_1, E_2, E_3$ corresponding to respectively $0, 1,$ or $2$ of the conditions $\{|i - j| \geq |R|K, |j - k| \geq |R|K\}$ holding true. Note that $E_1, E_2$ and $E_3$ are deterministic. 

\noindent If $E_1$ holds, use the trivial bound $ \mathbf{1}_{\Delta_{i,j} h(Y) \neq 0,\Delta_{j,k} h(Y') \neq 0} \leq 1$, leading to
\begin{align}
\notag \mathbb{E} & [ \mathbf{1}_{\Delta_{i,j} h(Y) \neq 0,\Delta_{j,k} h(Y') \neq 0} |\Delta_j h(Z)|^2 |\Delta_k h(Z')|^2 \mathbf{1}_E \mathbf{1}_{E_1}]  \\
\notag \leq & \quad \mathbb{E} [  |\Delta_j h(Z)|^2 |\Delta_k h(Z')|^2 \mathbf{1}_E \mathbf{1}_{E_1}] \\
\label{eq:brh_fi_e1} \leq & \quad \mathbf{1}_{E_1}  C K^4,
\end{align} 
\noindent using the Cauchy-Schwarz inequality.  

\noindent Conditioned on $E_3$, the sets $S_1, S_2 \cup S_3$ and $S_4$ are pairwise disjoint. Next, in similarity to an argument presented in~\cite{LRP}, if $s_1 \cap s = \emptyset$ and $s_2 \cap s = \emptyset$, for all $(s_1, s_2, s) \in (S_1, S_2, S)$, then $\Delta_{i,j} h(Y) = 0$. Therefore,  
\begin{align}
\notag  \mathbf{1}_{\Delta_{i,j} h(Y) \neq 0} \leq \sum_{ \substack{ s_1 \in S_1 \\ s_2 \in S_2 } } \sum_{s \in S} \mathbf{1}_{s_1 \cap s \neq \emptyset, s_2 \cap s \neq \emptyset},
\end{align}
\noindent and also
\begin{align}
\notag  \mathbf{1}_{\Delta_{j,k} h(Y') \neq 0} \leq \sum_{\substack{ s_3 \in S_3 \\ s_4 \in S_4} } \sum_{s \in S} \mathbf{1}_{s_3 \cap s \neq \emptyset, s_4 \cap s \neq \emptyset},
\end{align}
\noindent  Furthermore, 
\begin{align}
\notag  |\Delta_j h(Z) | \leq \sum_{s_5 \in S_5} \sum_{s \in S} \mathbf{1}_{s_5 \cap s \neq \emptyset},
\end{align}
\noindent and 
\begin{align}
\notag  |\Delta_k h(Z') | \leq \sum_{s_6 \in S} \sum_{s \in S} \mathbf{1}_{s_6 \cap s \neq \emptyset}.
\end{align}
\noindent Therefore, 
\begin{align}
\notag  \mathbb{E} & [ \mathbf{1}_{\Delta_{i,j} h(Y) \neq 0,\Delta_{j,k} h(Y') \neq 0} |\Delta_j h(Z)|^2 |\Delta_k h(Z')|^2 \mathbf{1}_E \mathbf{1}_{E_3} ]  \\
\notag \leq & \mathbb{E} \Bigg[ \Bigg( \sum_{ \substack {(s_1, s_2, s_3, s_4) \in (S_1, S_2, S_3, S_4) \\ (s', s'') \in S^2} }  \mathbf{1}_{\substack{ s_1 \cap s' \neq \emptyset, s_2 \cap s' \neq \emptyset,\\ s_3 \cap s'' \neq \emptyset, s_4 \cap s'' \neq \emptyset}}\Bigg) \\
\notag & \quad \quad \cdot \Bigg( \sum_{s_5 \in S_5} \sum_{s \in S} \mathbf{1}_{s_5 \cap s \neq \emptyset}\Bigg)^2 \Bigg( \sum_{s_6 \in S_6} \sum_{s \in S} \mathbf{1}_{s_6 \cap s \neq \emptyset}\Bigg)^2 \mathbf{1}_E \mathbf{1}_{E_3} \Bigg] \\
\label{eq:brh_long} \leq & \mathbb{E} \Bigg[ \sum_{ \substack {(s_1, \ldots, s_4) \in (S_1, \ldots, S_4) \\ (s_5, \ldots, s_8) \in S_{56}^4 } } \sum_{\substack{ (s', s'') \in S^2 \\ (s_5', \ldots, s_8') \in S^4}}  \mathbf{1}_{\substack{ s_1 \cap s' \neq \emptyset, s_2 \cap s' \neq \emptyset,\\ s_3 \cap s'' \neq \emptyset, s_4 \cap s'' \neq \emptyset}} \prod_{a, b = 5}^8 \mathbf{1}_{s_a \cap s_b' \neq \emptyset}  \mathbf{1}_E \mathbf{1}_{E_3} \Bigg],  
\end{align}
\noindent where $S_{56} = S_5 \cup S_6$ and $|S_{56}| \leq 4K$, conditioned on $E$.

\noindent To evaluate the summand expression we use the graph representation. Let $E_{\ell}$ be the event that  there are $\ell$ distinct points among $s',s'', s_5', \ldots, s_8'$, different from $s_1, \ldots, s_8$. Note that $\ell \in [0, 6]$. Conditioned on $E_{\ell}$, we can find a subgraph with no cycles and $\ell + 2$ edges, of the graph with edges $\{  \{s_1, s'\}, \{s_2, s'\}, \{s_3, s''\}, \{s_4, s''\} \} \cup \{ \{s_a, s_b'\}: a, b \in [5,8]\}$. Indeed, note that there are at least $3$ different points among $s_1, \ldots, s_4$. Next, if there are $x$ points present among $s', s''$ and $\ell - x$ points among $s_5', \ldots, s_8'$, we can find a subgraph with no cycles with at least $\ell - x$ edges among $\{ \{s_a, s_b'\}: a, b \in [5,8]\}$ and $x + 2$ edges among $\{  \{s_1, s'\}, \{s_2, s'\}, \{s_3, s''\}, \{s_4, s''\} \}$.

\noindent Then, if we further condition on the values of the hidden variables $H$, we get, by independence,
\begin{align}
\notag  \mathbb{E} \left[ \mathbf{1}_{\substack{ s_1 \cap s' \neq \emptyset, s_2 \cap s' \neq \emptyset,\\ s_3 \cap s'' \neq \emptyset, s_4 \cap s'' \neq \emptyset}} \prod_{a, b = 5}^8 \mathbf{1}_{s_a \cap s_b' \neq \emptyset}  \mathbf{1}_E \mathbf{1}_{E_3} \mathbf{1}_{E_{\ell}} \bigg| H\right]   \leq \mathbf{1}_{E_3}    \left( \frac{c_M V_2}{n}\right)^{\ell + 2}.
\end{align}

\noindent Then,~\eqref{eq:brh_long} is further bounded by
\begin{align}
\label{eq:brh_fi_e3}  \mathbf{1}_{E_3}  \sum_{\ell = 0}^6 (4K)^8 \binom{6}{\ell} (10n)^{\ell} \left( \frac{c_M V_2}{n}\right)^{\ell + 2} \leq \mathbf{1}_{E_3}  C  K^8 n^{-2},
\end{align}
\noindent for some $C > 0$, independent of $n$ and $K$.

\noindent Finally, assume that $E_2$ holds and that $|i - j| \geq |R| K$. The case $|j - k| \geq |R|K$ is identical. As above, using the trivial bound $\mathbf{1}_{\Delta_{j,k} h(Y') \neq 0} \leq 1$,
\begin{align}
\notag  \mathbb{E} & [ \mathbf{1}_{\Delta_{i,j} h(Y) \neq 0,\Delta_{j,k} h(Y') \neq 0} |\Delta_j h(Z)|^2 |\Delta_k h(Z')|^2 \mathbf{1}_E \mathbf{1}_{E_2} ]  \\
\notag \leq &  \mathbb{E}  [ \mathbf{1}_{\Delta_{i,j} h(Y) \neq 0} |\Delta_j h(Z)|^2 |\Delta_k h(Z')|^2 \mathbf{1}_E \mathbf{1}_{E_2} ]  \\
\notag \leq & \mathbb{E} \Bigg[ \Bigg( \sum_{ \substack {(s_1, s_2) \in (S_1, S_2) \\ s' \in S} }  \mathbf{1}_{ s_1 \cap s' \neq \emptyset, s_2 \cap s' \neq \emptyset}\Bigg) \Bigg( \sum_{s_5 \in S_5} \sum_{s \in S} \mathbf{1}_{s_5 \cap s \neq \emptyset}\Bigg)^2 \Bigg( \sum_{s_6 \in S_6} \sum_{s \in S} \mathbf{1}_{s_6 \cap s \neq \emptyset}\Bigg)^2 \mathbf{1}_E \mathbf{1}_{E_2} \Bigg] \\
\label{eq:brh_long_2} \leq & \mathbb{E} \Bigg[ \sum_{ \substack {(s_1 s_2) \in (S_1, S_2) \\ (s_5, \ldots, s_8) \in S_{56}^4 } } \sum_{\substack{ s' \in S \\ (s_5', \ldots, s_8') \in S^4}}  \mathbf{1}_{ s_1 \cap s' \neq \emptyset, s_2 \cap s' \neq \emptyset} \prod_{a, b = 5}^8 \mathbf{1}_{s_a \cap s_b' \neq \emptyset}  \mathbf{1}_E \mathbf{1}_{E_2} \Bigg].
\end{align}
\noindent Then, if we condition on $E_{\ell}$ and the values of the hidden variables $H$, we get
\begin{align}
\notag \mathbb{E} \Bigg[ \mathbf{1}_{ s_1 \cap s' \neq \emptyset, s_2 \cap s' \neq \emptyset} \prod_{a, b = 5}^8 \mathbf{1}_{s_a \cap s_b' \neq \emptyset}  \mathbf{1}_E \mathbf{1}_{E_2} \mathbf{1}_{E_{\ell}} | H \Bigg] \leq \mathbf{1}_{E_2} \left( \frac{c_M V_2}{n}\right)^{\ell + 1},
\end{align}
\noindent since in this case $s_1$ and $s_2$ are distinct and we can find a subgraph with $\ell + 1$ edges and no cycles. 

\noindent  Then,~\eqref{eq:brh_long_2} is  bounded by
\begin{align}
\label{eq:brh_fi_e2}  \mathbf{1}_{E_2}  \sum_{\ell = 0}^6 (4K)^6 \binom{6}{\ell} (10n)^{\ell} \left( \frac{c_M V_2}{n}\right)^{\ell + 1} \leq \mathbf{1}_{E_2}  C K^6  n^{-1},
\end{align}
\noindent for some $C > 0$.

\noindent We get the following bound on $B_{|R|} (h)$ using~\eqref{eq:brh_fi_1}, ~\eqref{eq:brh_fi_e1}, ~\eqref{eq:brh_fi_e2}, and~\eqref{eq:brh_fi_e3}, 
\begin{align}
\notag B_{|R|}(h) \leq C ( \mathbf{1}_{E_1} K^4 + \mathbf{1}_{E_2} K^6 /n + \mathbf{1}_{E_3} K^8 / n^2 + n^4 (1 - \ep)^K).
\end{align}

\noindent Then, 
\begin{align}
\notag  \sum_{i = 0}^{|R| - 1} & \sum_{j, k \notin A} \mathbf{1}_{i \neq j \neq k} B_{|R|} (h) \\ 
\notag \leq &  C ( n K^6 + n^2 K^7/n + n^3 K^8 /n^2 + n^7 (1 - \ep)^K) \\
\notag \leq & C n (\ln n)^8,
\end{align}
\noindent where we have chosen $K = c \ln n$ for a suitable $c > 0$, independent of $n$.
\noindent Finally, similar arguments yield, as in the case for $f_V$, 
\begin{align}
\notag B_{|R|}^{(k)}(h) \leq C (\ln n)^4/n,\\
\notag B_{|R|}^{(j)}(h) \leq C (\ln n)^4/n.
\end{align}

\noindent and
\begin{align}
\notag  \sum_{i = 0}^{|R| -1} \sum_{j, k \notin A} (\mathbf{1}_{i \neq j = k} + \mathbf{1}_{i = k \neq j} ) B_{|R|}^{(k)}(h)  \leq Cn^2 (\ln n)^4/n = Cn (\ln n)^4, \\
\notag  \sum_{i = 0}^{|R| -1} \sum_{j, k \notin A}  (\mathbf{1}_{i \neq j = k} + \mathbf{1}_{i = j \neq k} ) B_{|R|}^{(j)}(h) \leq Cn^2 (\ln n)^4/n = Cn (\ln n)^4.
\end{align}

\noindent The bounds on the variance terms in Proposition~\ref{prop:var_b} become
\begin{align}
\notag  \sqrt{Var(\mathbb{E}[U|R])} \leq & \frac{1}{\sqrt{2}} \sum_{A \subsetneq[|R|]} k_{|R|, A} \bigg(  C n (\ln n)^4 + Cn (\ln n)^8 + 2 C n (\ln n)^4 \bigg)^{1/2}\\
\label{eq:cov_var_fi} \leq  & C \sqrt{n} (\ln n)^4.
\end{align}

\noindent Then,~\eqref{eq:thm_cov_iso} follows from~\eqref{eq:cov_var_fi},~\eqref{eq:noncov_fi} and Proposition~\ref{thm:wass_hmm}.

\end{proof}

\subsection{Set approximation with random tessellations.} 

\noindent Let $K \subseteq [0,1]^d$ be compact, and $X$ be a finite collection of points in $K$. The Voronoi reconstruction, or the Voronoi approximation, of $K$ based on $X$ is given by
\begin{align}
\notag K^X \coloneqq \{ y \in \mathbb{R}^d: \text{ the closest point from } y \text{ in } X \text{ lies in } K\}.
\end{align} 

\noindent For $x \in [0,1]^d$, denote by $V(x; X)$ the Voronoi cell with nucleus $x$ among $X$, as
\begin{align}
\notag V(x; X) \coloneqq \{ y \in [0,1]^d : ||y - x|| \leq || y - x'||, \text{ for any } x' \in (X,x) \},
\end{align}
\noindent where $(X, x) = X \cup \{x\}$, and where, as usual, $||\cdot||$ is the Euclidean norm in $\mathbb{R}^d$. The volume approximation of interest is:
\begin{align}
\notag  \varphi(X) \coloneqq Vol(K^X) = \sum_i \mathbf{1}_{X_i \in K} Vol( V(X_i; X)).
\end{align}

\noindent In~\cite{LRP}, $X = (X_1, \ldots, X_n)$ is a vector of $n$ iid random variables uniformly distributed on $[0,1]^d$. Here, we consider $X_1, \ldots, X_n$, generated by a hidden Markov model in the following way. Let $Z_1, \ldots, Z_n$ be an aperiodic irreducible Markov chain on a finite state space $\mathcal{S}$. Each $s \in \mathcal{S}$ is associated with a measure $m_s$ on $[0,1]^d$. Then for each measurable $T \subseteq [0,1]^d$,
\begin{align}
\notag \mathbb{P}(X_i \in T | Z_i = s) = m_s(T).
\end{align} 
\noindent Assume, moreover, that there are constants $0 < c_m \leq c_M$, such that for any $s \in \mathcal{S}$ and measurable $T \subseteq [0,1]^n$, 
\begin{align}
\notag c_m\frac{ |T|}{n} \leq m_s(T) \leq c_M \frac{ |T|}{n}.
\end{align}

\noindent Recall the notions of Lebesgue-boundary of $K$  given by
\begin{flalign}
\notag  &\partial K  \coloneqq  \{ x \in [0,1]^d : Vol(B(x, \ep) \cap K) > 0 \text{ and } Vol(B(x, \ep) \cap K^c) > 0, \text{ for any } \ep > 0 \}, && 
\end{flalign}
\noindent and 
\begin{flalign}
\notag & \partial K^r \coloneqq  \{ x : d(x, \partial K) \leq r\},  \partial K_+^r \coloneqq  K^c \cap \partial K^r, && 
\end{flalign}
\noindent where $d(x, A)$ is the Euclidean distance from $x \in \mathbb{R}^d$ to $A \subseteq \mathbb{R}^d$.
 
\noindent Now, for $\beta > 0$, let 
\begin{align}
\notag \gamma (K,r, \beta) \coloneqq &  \int_{\partial K_+^r} \left( \frac{Vol(B(x, \beta r) \cap K)}{r^d}\right)^2 dx.
\end{align}

\noindent Next, recall that $K$ is said to satisfy the weak rolling ball condition if
\begin{align}
\label{eq:rbc} \gamma(K, \beta) \coloneqq \liminf_{r > 0} Vol(\partial K^r)^{-1} (\gamma(K, r, \beta) + \gamma(K^c, r, \beta) ) > 0.
\end{align}

\begin{thm}\label{thm:vor_main} Let $K \subseteq [0,1]^d$ be such that
\begin{align}
\notag Vol(\partial K^r) \leq S_+(K)r^{\alpha}, \quad r > 0, 
\end{align} 
\noindent for some $S_+(K), \alpha > 0$.  Then for $n,q \geq 1$,
\begin{align}
\label{eq:vor_mom} \mathbb{E} |\varphi(X) - \mathbb{E} \varphi(X) |^q \leq C_{d, q, \alpha}  S_+ (K) (\ln n)^{q} n^{-q/2 - \alpha /d},
\end{align}
\noindent for some $C_{d, q, \alpha} > 0$. If furthermore $K$ satisfies the weak rolling ball condition~\eqref{eq:rbc} and 
\begin{align}
\notag Vol(\partial K^r) \geq S_- (K) r^{\alpha}, \quad r > 0,
\end{align}
\noindent for some $S_- (K) > 0$, then for $n$ sufficiently large, 
\begin{align}
\label{eq:vor_var} C_d^- S_- (K) \gamma(K) \leq \frac{Var ( \varphi(K, X)) }{n^{-1 - \alpha/d} } \leq C_d^+ S_+ (K) C_{d, 2, \alpha}, 
\end{align}
\noindent for some $C_d^-, C_d^+ > 0$; and for every $\ep > 0$, there is $c_{\ep} > 0$ not depending on $n$ such that
\begin{align}
\label{eq:vor_clt} d_K \left( \frac{ \varphi(X) - \mathbb{E} \varphi(X) }{\sqrt{ Var (\varphi(X)) }}, \mathcal{N}\right) \leq c_{\ep} \frac{ (\log n)^{3 + \alpha/d + \ep}}{n^{1/2 - \alpha/2d}},
\end{align}
\noindent for $n \geq 1$.
\end{thm}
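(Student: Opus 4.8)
The plan is to treat the three displays~\eqref{eq:vor_mom},~\eqref{eq:vor_var} and~\eqref{eq:vor_clt} in turn, using throughout the instruction representation $\varphi(X) \overset{d}{=} h(R)$ of Section~\ref{s:construction}, so that $R = (R_0,\ldots,R_{|\mathcal{S}|(n-1)})$ is an independent family. The common engine is a moment estimate on the discrete derivative $\Delta_i h = h(R) - h(R^i)$. Since $\varphi$ is \emph{not} Lipschitz in the bounded sense required by Proposition~\ref{prop:der_tail}, this estimate must be obtained by hand, combining three ingredients: (i) the geometric fact that perturbing the points only changes $Vol(K^X)$ inside a collar of $\partial K$, controlled by the hypothesis $Vol(\partial K^r) \le S_+(K)\, r^\alpha$; (ii) standard tail bounds for Voronoi cells, so that at the density in force the relevant cells have radius of order $n^{-1/d}$ and volume of order $n^{-1}$; and (iii) the coupling of the hidden chain, which forces a perturbation of a single instruction to propagate only $O(\log n)$ positions before the two realizations recouple, exactly as in the geometric decay~\eqref{eq:s_decay} underlying Proposition~\ref{prop:der_tail}.

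For~\eqref{eq:vor_mom} I would first establish a per-derivative bound of the form $\mathbb{E}|\Delta_i h|^q \le C(\ln n)^q S_+(K)\, n^{-q-\alpha/d}$, uniformly in $i$. The event $\{\Delta_i h \neq 0\}$ forces at least one of the $O(\log n)$ affected points into the collar $\partial K^{r}$ at the Voronoi scale $r \asymp n^{-1/d}$; the probability of this is $\lesssim S_+ r^\alpha \asymp S_+ n^{-\alpha/d}$ by the upper density bound on $m_s$ and the boundary regularity, while the magnitude of the change is at most a sum of $O(\log n)$ cell volumes, each of order $n^{-1}$, power-mean across these cells producing the $(\ln n)^q$ factor. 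Feeding this into the generalized Efron--Stein inequality used in the proof of Corollary~\ref{cor:var}, namely $(\mathbb{E}|h - \mathbb{E}h|^q)^{1/q} \le C_q(\sum_i (\mathbb{E}|\Delta_i h|^q)^{2/q})^{1/2}$, and using that there are $O(n)$ instructions, the exponents combine to $n^{-q/2-\alpha/d}$ and the logarithm to $(\ln n)^q$, which is~\eqref{eq:vor_mom}.

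The upper bound in~\eqref{eq:vor_var} is essentially the case $q=2$ above, except that the spurious logarithm is removed by conditioning on the hidden chain $Z$ and importing the iid variance bound of~\cite{LRP} for the conditionally independent points, whose densities are comparable to uniform through the two-sided bounds on $m_s$; one checks separately that $Var(\mathbb{E}[\varphi\mid Z])$ is of the same order $n^{-1-\alpha/d}$, since the dependence of $\mathbb{E}[\varphi\mid Z]$ on $Z$ is again localized to the boundary collar. For the \emph{lower} bound I would again condition on $Z$, so that given $Z = z^n$ the points are independent with densities bounded below by $c_m$; the iid lower-variance argument of~\cite{LRP}, which rests on the weak rolling ball condition~\eqref{eq:rbc} to keep the single-point contributions from degenerating, then applies conditionally and yields $Var(\varphi\mid Z) \gtrsim c_m\, S_-(K)\gamma(K)\, n^{-1-\alpha/d}$; taking expectations and using $Var(\varphi) \ge \mathbb{E}[Var(\varphi\mid Z)]$ gives the left inequality of~\eqref{eq:vor_var}, with no logarithmic loss.

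Finally,~\eqref{eq:vor_clt} follows by inserting these estimates into the Kolmogorov bound~\eqref{eq:kol} of Proposition~\ref{thm:wass_hmm}, with $\sigma^2 = Var(\varphi) \asymp n^{-1-\alpha/d}$ supplied by~\eqref{eq:vor_var}. The non-variance terms $\sigma^{-3}\sum_j \sqrt{\mathbb{E}|\Delta_j h|^6}$ and $\sigma^{-3}\sum_j \mathbb{E}|\Delta_j h|^3$ are controlled by the per-derivative moment estimate, and the variance terms are handled through Proposition~\ref{prop:var_b}: the quantities $B_{|R|}(h)$, $B^{(j)}_{|R|}(h)$, $B^{(k)}_{|R|}(h)$ are estimated by the same boundary-collar-plus-graph-counting scheme used for $f_I$ in the proof of Theorem~\ref{thm:cov}, replacing the intersection counts $\mathbf{1}_{s\cap x \neq \emptyset}$ by Voronoi-cell overlaps and weighting by $Vol(\partial K^r) \le S_+ r^\alpha$. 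Collecting powers of $n$ and optimizing the truncation level $K = c\log n$ that separates the coupled from the recoupled regime produces the exponent $n^{-(1/2-\alpha/2d)}$ and the logarithmic power $3 + \alpha/d$, the extra $\ep$ absorbing the slack in the choice of $c$. I expect the main obstacle to be precisely this last step: because $\varphi$ is non-Lipschitz, the functionals $B_{|R|}(h)$ cannot be read off from a crude Lipschitz bound, and one must simultaneously track the geometric decay of the chain coupling, the tail of the Voronoi cell sizes, and their interaction with the boundary exponent $\alpha$ — the same delicate bookkeeping that makes the $f_I$ part of Theorem~\ref{thm:cov} the hardest, now compounded by the scale $n^{-1/d}$ entering the boundary measure.
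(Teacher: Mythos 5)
Your proposal follows essentially the same route as the paper: the per-derivative moment bound $\mathbb{E}|\Delta_i h|^r \lesssim S_+(K)(\ln n)^r n^{-r-\alpha/d}$ obtained by combining the $O(\log n)$ recoupling of the hidden chain with the Voronoi-cell tail and the collar estimate $Vol(\partial K^r)\le S_+(K)r^{\alpha}$, followed by Efron--Stein for \eqref{eq:vor_mom}; conditioning on the hidden chain and rerunning the rolling-ball argument of~\cite{LRP} (for independent but non-identically distributed points) for the lower bound in \eqref{eq:vor_var}; and Proposition~\ref{prop:var_b} with the deterministic events $E_1,E_2,E_3$ and the small-cell event $\Omega_n$ for \eqref{eq:vor_clt}. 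The one place you go beyond the paper is the log-free upper bound in \eqref{eq:vor_var}, which the paper leaves implicit; your suggestion to remove the logarithm by conditioning on $Z$ and invoking the conditional i.i.d.-type bound is a sensible way to supply it.
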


\noindent As in~\cite{LRP}, we split the proof of Theorem~\ref{thm:vor_main} into several results. To start, we show:

\begin{thm}\label{thm:vor_clt} Let $0 < \sigma^2 = Var( \varphi(X))$. Assume that $Vol(\partial K^r) \leq S_+(K) r^{\alpha}$ for some $S_+(K), \alpha > 0$. Then~\eqref{eq:vor_mom} holds, and for every $\ep > 0$ there is a constant $C$ not depending on $n$ such that, for $n \geq 1$, 
\begin{align}
\label{eq:thm_cov_vor} d_K \left( \frac{ \varphi(X) - \mathbb{E} \varphi(X) }{\sigma }, \mathcal{N}\right)\leq &  C\left( \frac{ (\ln n)^{5 + 2 \ep} }{ \sigma^2 n^{3/2 + \alpha/d}} +  \frac{ (\ln n)^3}{ \sigma^3 n^{3 + \alpha/d}}\right).
\end{align}

\end{thm}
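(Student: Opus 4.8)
The plan is to follow the template of the covering-process argument (Theorem~\ref{thm:cov}), treating $\varphi$ as a non-Lipschitz functional in the spirit of $f_I$. First I would write $\varphi(X_1,\dots,X_n)=h(R_0,\dots,R_{|\mathcal{S}|(n-1)})$ for the instruction sequence $R$ of Section~\ref{s:construction}, so that Proposition~\ref{thm:wass_hmm} applies and reduces both \eqref{eq:vor_mom} and \eqref{eq:thm_cov_vor} to estimates on the discrete derivatives $\Delta_i h(R)$ and on the variance terms $Var(\mathbb{E}[T_{|R|}(h)\mid R])$ and $Var(\mathbb{E}[T'_{|R|}(h)\mid R])$.

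The central estimate, from which everything else follows, is a per-instruction moment bound of the form $\mathbb{E}|\Delta_i h(R)|^q\leq C_q(\ln n)^{c(q)}\,n^{-q-\alpha/d}$. To obtain it I would combine three ingredients. First, a stabilization/locality property of the Voronoi volume functional: resampling a single point moves only the Voronoi cells within a random radius of stabilization of the old and new positions, a number with exponential tails, and each such cell has volume of order $n^{-1}$; moreover the reassigned mass contributes to $\varphi=Vol(K^X)$ only through cells straddling $\partial K$. Second, the boundary hypothesis $Vol(\partial K^r)\leq S_+(K)r^{\alpha}$ together with the density bound on $m_s$, which at the natural scale $r\sim n^{-1/d}$ supplies a factor $S_+(K)\,n^{-\alpha/d}$ for the probability that the affected points lie near $\partial K$. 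Third, the propagation estimate of Proposition~\ref{prop:der_tail}: perturbing one instruction alters the realization over a window of at most $O(\ln n)$ positions with overwhelming probability. Splitting the expectation over the event that all stabilization radii and the propagation length are at most $K\sim\ln n$ (a complementary probability of order $(1-\ep)^K$, absorbed by the trivial bound $|\varphi|\leq 1$ exactly as in \eqref{eq:per_fi_1}), and using the boundary scaling on the good event, yields the claim.

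Granting this, \eqref{eq:vor_mom} follows from the generalized Efron--Stein inequality as in Corollary~\ref{cor:var}: summing $(\mathbb{E}|\Delta_i h|^q)^{2/q}$ over the $|\mathcal{S}|(n-1)+1$ instructions and raising to the power $q/2$ produces $(|\mathcal{S}|n)^{q/2}(\ln n)^{c}n^{-q-\alpha/d}=C(\ln n)^{c}n^{-q/2-\alpha/d}$, with the log power tightened to $q$ by bounding the window contribution through a single maximal term rather than a crude sum. The non-variance contributions to the Kolmogorov bound \eqref{eq:kol} are then immediate from the $q=3$ and $q=6$ instances of the per-instruction estimate.

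The main obstacle is the variance terms, that is, bounding $B_{|R|}(h)$, $B^{(j)}_{|R|}(h)$ and $B^{(k)}_{|R|}(h)$ of Proposition~\ref{prop:var_b} for this non-Lipschitz, volume-weighted functional. I would run the same scheme as in the proof of \eqref{eq:thm_cov_iso}: introduce the event $E$ that each perturbation in the defining supremum propagates at most $K\sim\ln n$ levels, so $\mathbb{P}(E^c)\leq(1-\ep)^K$ with the off-event absorbed by $|\varphi|\leq 1$; then split according to how many of the index-proximity conditions $\{|i-j|\leq|R|K,\ |j-k|\leq|R|K\}$ hold, bound the indicators $\mathbf{1}_{\Delta_{i,j}h\neq 0}$ by sums of cell-intersection indicators, and evaluate the resulting expectations by the bipartite-graph spanning-subtree argument, now carrying the cell-volume factors of order $n^{-1}$ and the boundary exponent $\alpha$. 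The delicate point, and the source of the $\ep$ in the final log-power, is the joint optimization of the truncation level $K$ against the polynomial stabilization tails and the boundary concentration; taking $K=c\ln n$ and assembling the pieces through \eqref{eq:var_UA} should give $\sqrt{Var(\mathbb{E}[U\mid R])}$ of the order appearing in the first (variance) term of \eqref{eq:thm_cov_vor}. Inserting both groups of estimates into \eqref{eq:kol} then yields \eqref{eq:thm_cov_vor}.
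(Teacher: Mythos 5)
Your plan follows the paper's proof essentially step for step: the instruction representation $\varphi(X)=h(R)$, a per-instruction moment bound obtained by combining Voronoi stabilization, the boundary scaling $Vol(\partial K^r)\leq S_+(K)r^{\alpha}$ at scale $n^{-1/d}$, and the $O(\ln n)$ propagation window with the bad event absorbed via $|\varphi|\leq 1$, then Efron--Stein for~\eqref{eq:vor_mom} and an $f_I$-style analysis of $B_{|R|}(h)$, $B^{(j)}_{|R|}(h)$, $B^{(k)}_{|R|}(h)$ using the index-proximity events and conditioning on the hidden chain. The only substantive details you leave implicit are Lemma~\ref{lem:bound_ukq} (the periodized, worst-case comparison process needed because the points are conditionally independent but not identically distributed) and the event $\Omega_n$ with $\rho_n=(\ln n)^{1/d+\ep'}$ controlling the maximal cell radius, which is the actual source of the $\ep$ in the final logarithmic exponent.
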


\begin{proof}[Proof of Theorem~\ref{thm:vor_clt}] 

\noindent Recall that $x, y \in [0,1]^d$ are said to be Voronoi neighbors among the set $X$ if $V(x; X) \cap V(y; X) \neq \emptyset$. In general, the Voronoi distance $d_V(x, y;X)$ among $X$ of $x$ and $y$, is given by the smallest $k \geq 1$ such that there exist $x = x_0, x_1 \in X, \ldots, x_{k-1} \in X, x_k = y$ and $x_i, x_{i+1}$ are Voronoi neighbors for $i = 0, \ldots, k-1$.  

\noindent Denote by $v(x, y; X) = Vol\bigg(V(y; X) \cap V(x; (y, X)) \bigg)$, the volume that $V(y;X)$ loses when $x$ is added to $X$. Then, for $x \notin X$,
\begin{align}
\notag \varphi(X, x)  - \varphi(X) = \mathbf{1}_{x \in K} \sum_{y \in X \cap K^c} v(x, y; X) - \mathbf{1}_{x \in K^c} \sum_{y \in X \cap K} v(x, y; X). 
\end{align}

\noindent Let $R_k(x; X)$ be the distance from $x$ to the furthest point in the cell of a $k$th order Voronoi neighbor in $X$, i.e., 
for $X = (X_1, \ldots, X_n)$, 
\begin{align}
\notag R_k(x; X) = \sup \{ || y - x|| : y \in V(X_i; X), d_V(x, X_i; X) \leq k\},
\end{align}
\noindent with $R(x; X) \coloneqq R_1(x; X)$. If $x$ does not have $k$th order neighbors, take $R_k(x; X) = \sqrt{d}$. Then,
\begin{align}
\notag Vol(V(x; X)) \leq \kappa_d R(x; X)^d, 
\end{align}
\noindent where $\kappa_d = \pi^{d/2} / \Gamma(d/2 + 1)$ is the volume of the unit ball in $\mathbb{R}^d$.

\begin{lem}\label{lem:bound_ukq} Assume there exist $S_+(K), \alpha > 0$, such that $Vol(\partial K^r) \leq S_+(K) r^{\alpha}$ for all $r > 0$. Let
\begin{align}
\notag U_k(i) = \mathbf{1}_{d(X_i, \partial K) \leq R_k(X_i; X)} R_k(X_i; X)^d.
\end{align}
Then, for some $c_{d, qd + \alpha, k} >0$, 
\begin{align}
\notag \mathbb{E} U_k^q(i) \leq S_+ (K) c_{d,q d + \alpha, k} n^{- q - \alpha/d}, 
\end{align}
\noindent for all $n \geq 1$, $q \geq 1$.
\end{lem}

\begin{proof} To simplify computations,  introduce the process $X'$ defined as
\begin{align}
\notag X' = \bigcup_{m \in \mathbb{Z}^d} (X + m).
\end{align}
\noindent Unlike the independent setting in~\cite{LRP}, here the law of $X'$ is only invariant under integer valued translations. Note that a.s.~$X'$ has exactly $n$ points in any cube $[t, t+1]^d$, where $t \in \mathbb{R}$. Let $T_x = \{ [y, y+1]^d : y \in \mathbb{R}^d, x \in [y, y+1]^d\}$. Define $\overline{R_k}(x; X)$ as
\begin{align}
\notag \overline{R_k}(x; X)  \coloneqq \sup_{T \in T_x} R_k (x; X' \cap T). 
\end{align}

\noindent Note that if $x \in [0,1]^d$, then $[0,1 ]^d \in T_x$ and so $\overline{R_k}(x; X') \geq R_k(x; X)$. When the $X_i$ are sampled independently and uniformly, as in~\cite{LRP}, it is the case that $\overline{R_k}(x; X')$ does not depend on the position of $x$. However, in the hidden Markov model case we need to find a further bound on $\overline{R_k}(x; X')$.

\noindent For that purpose, consider the cube $K_0 \coloneqq [-1/2, 1/2]^d$ of volume $1$ centered at $\mathbf{0} \in \mathbb{R}^d$. Let $B_A$ be the open ball of $\mathbb{R}^d$, centered at $\mathbf{0}$, and of volume $A < 1$, to be chosen later. Next,  let $\tilde{X} = (0, \tilde{X}_1, \ldots, \tilde{X}_{n-1})$ be such that $\tilde{X}_i \in K_0$, for all $i = 1, \ldots, n-1$. Furthermore, for any Lebesgue measurable $T \subseteq K_0$, set
\begin{align}
\notag \mathbb{P}(\tilde{X}_i \in T) = c_m  |T \cap B_A| + c_M |T \cap B_A^c|,
\end{align} 
\noindent for all $i \in 1, \ldots, n-1$ where $|\cdot|$ now denotes the Lebesgue measure of the corresponding sets. If $A = (c_M - 1)/(c_M - c_m)$, then the above is a well-defined positive measure on $K_0$. From the restrictions of the hidden Markov model, if $\tilde{R}_k = R_k(0; \tilde{X})$, 
\begin{align}
\notag \overline{R_k}(x; X) \leq \tilde{R}_k.
\end{align}
\noindent Indeed, $\tilde{R}_k$ represents the worst-case scenario where the remaining points of $X$ are least likely to be distributed in the volume closest to $x$.

\noindent Then, 
\begin{align}
\label{eq:ukq_splus}  \mathbb{E} U_k^q(i) \leq  \mathbb{E}_{X_i, \tilde{X}}[ \mathbf{1}_{d(X_i; \partial K) \leq \tilde{R}_k} \tilde{R}_k^{qd}] \leq   S_+(K) \mathbb{E}_{\tilde{X}}[\tilde{R}_k^{qd+ \alpha}],
\end{align}
\noindent where we have used the upper bound on $Vol(\partial K^r)$.

\noindent To estimate $\mathbb{E}[ \tilde{R}_k^{qd+ \alpha}]$, note that if $\tilde{R}_k \geq r$, there will be a open ball of radius $r/2k$ in $K_0$ containing no points of $\tilde{X}$. Moreover, there will be $s_d \in (0,1)$, depending only on the dimension $d$, such that every ball of radius $2k$ contains a cube of side length $s_d r/k$ of the form $[g - s_d r/ 2k, g + s_d r/ 2k]$ where $g \in (s_d r / k) \mathbb{Z}^d $. Then, if $s_d r / k < 1$, 
\begin{align}
\notag \mathbb{P} (\tilde{R}_k \geq r) \leq & \mathbb{P} ( \exists g \in (s_d r / k ) \mathbb{Z}^d : \tilde{X} \cap [g - s_d r/ 2k, g + s_d r/ 2k] = \mathbf{0}) \\
\notag \leq & \#  \{ g  :   g \in (s_d r / k) \mathbb{Z}^d \cap [-r, r]^d\} \mathbb{P}( \tilde{X} \cap [ - s_d r/ 2k,  s_d r/ 2k] = \mathbf{0}) \\
\notag \leq & \frac{k^d}{(s_d )^d} (1 - c_m (s_d r / k)^d)^{n-1}.
\end{align}
If, on the other hand, $s_d r / k \geq 1$, $ \tilde{X} \cap [g - s_d r/ 2k, g + s_d r/ 2k]  = \tilde{X}$ and $ \mathbb{P}(\tilde{R}_k \geq r) = 0$. Then, using $1 - x \leq e^{-x}$, for any $u > 0$,  
\begin{align}
\notag \mathbb{E}[ \tilde{R}(0, \tilde{X})^u ] = & \int_0^{\infty} \mathbb{P}( \tilde{R}(0, \tilde{X}) \geq r^{1/u}) dr \\
\notag  \leq & c_{d, k} \int_0^{\infty} (1 - c_m (s_d r^{1/u} / k)^d)^{n-1} dr \\
\notag \leq & c_{d, k} \int_0^{\infty} \exp( - c_m (n-1) (s_d r^{1/u} / k)^{d}) dr \\
\notag \leq & c_{d, k, u} (n-1)^{u/d} \int_0^{\infty} \exp (-r^{d/u}) dr.
\end{align}

\noindent Applying the above in~\eqref{eq:ukq_splus} yields
\begin{align}
\notag  \mathbb{E} U_k^q(i) \leq c_{d, k, qd + \alpha} S_+(K) n^{- q-  \alpha/ d},
\end{align}
\noindent where $c_{d, k, qd + \alpha} > 0$ depends only on the parameters of the transition probabilities of the hidden chain and on $d, k$ and $qd + \alpha$, but neither on $n$ nor on $i$.

\end{proof}

\noindent Again, as before, we introduce a set of instructions $R$ and a function $h$, such that $h(R) = \varphi(X)$. We apply Proposition~\ref{thm:wass_hmm} and the initial step is to bound $\mathbb{E}[|\Delta_i h(R)|^r]$, where $ r > 0$. 

\noindent Let $S(R)$ be the original set of points generated by $R$ and $S(R^i)$ be the set of points generated after the change in the instruction $R_i$. The following proposition is the version of~\cite[Proposition 6.4]{LRP} for our framework.

\begin{prop}\label{prop:conds_voronoi} (i) If for every $s \in S(R) \setminus S(R^i)$, the set $R_1(s, S(R))$, containing $s$ and all its neighbors, is either entirely in $K$, or entirely in $K^c$, then $\Delta_i h(R)= 0$. A similar result holds for $s \in S(R^i) \setminus S(R)$ and the set $R_1(s, S(R^i))$.

\noindent (ii) Assume $|i - j|$ is large enough, so that $(S(R^i) \setminus S(R) ) \cup (S(R^j) \setminus S(R)) = S(R^{ij}) \setminus S(R)$,where $S(R^{ij})$ is the set of points generated after the changes in both $R_i$ and $R_j$. If for every $s_1 \in S(R^i) \Delta S(R)$ and $s_2 \in S(R^j) \Delta S(R)$, at least one of the following holds: 
\begin{enumerate}
\item $d_V( s_1, s_2; S(R^{ij}) \cap S(R)) \geq 2$, or
\item $d_V(s_1, \partial K; S(R^{ij}) \cap S(R)) \geq 2$ and $d_V(s_2, \partial K; S(R^{ij}) \cap S(R) ) \cap S(R)) \geq 2$, 
\end{enumerate}
\noindent then $\Delta_{i,j} h(R) = 0$.

\end{prop}

\noindent In similarity to the proof of Theorem~\ref{thm:cov}, then write
\begin{align}
\notag |\Delta_i h(R)| \leq &  \sum_{s \in S(R) \setminus S(R^i)} \mathbf{1}_{d_{S(R)}(s, \partial K) \leq R_1(s; S(R))} k_d R_1(s; S(R))^d  \\
\notag & +  \sum_{s \in S(R^i) \setminus S(R)} \mathbf{1}_{d_{S(R^i)}(s, \partial K) \leq R_1(s; S(R^i))} k_d R_1(s; S(R^i))^d. 
\end{align}

\noindent As before for some $T > 0$, there is an event $E$ and $\ep > 0$, such that conditioned on $E$, $|S(R^i) \setminus S(R)| = |S(R) \setminus S(R^i)| \leq T$ and $\mathbb{P}(E^c) \leq (1 - \ep)^T$. Then, from Lemma~\ref{lem:bound_ukq} there is $S_+(K), \alpha > 0$, such that  
\begin{align}
\notag \mathbb{E} |\Delta_i h(R)|^r \leq c_{d,r, \alpha}   (1 - \ep)^T + c_{d,r, \alpha} S_+(K)  T^r  n^{- r - \alpha /d}, 
\end{align} 
\noindent where $c_{d,r , \alpha}$ depends on the parameters of the model, the dimension $d$, as well as $r$ and $\alpha$. If $T = c\ln n$, for a suitable $c > 0$, then
\begin{align}
\label{eq:vor_delta_r} \mathbb{E} |\Delta_i h(R)|^r  \leq c_{d,r, \alpha} S_+(K)  (\ln n)^r  n^{- r - \alpha /d}.
\end{align}
\noindent An application of the Efron-Stein's inequality then yields~\eqref{eq:vor_mom}. Moreover, for the non-variance term in Theorem~\ref{thm:wass_hmm}, we have
\begin{align}
\label{eq:noncov_vor} \frac{1}{4 \sigma^3} \sum_{j = 0}^{|R| - 1} \sqrt{\mathbb{E} | \Delta_j h(R)|^6} + \frac{\sqrt{2 \pi} } {16 \sigma^3} \sum_{j = 0}^{|R| - 1} \mathbb{E} |\Delta_j h(R)|^3 \leq C \sigma^{-3}   (\ln n)^3  n^{- 3 - \alpha /d}.
\end{align}  

\noindent Next we analyze 
\begin{align}
\label{eq:brh_voronoi} B_{|R|} (h) \coloneqq \sup_{Y, Y', Z, Z'} \mathbb{E}[ \mathbf{1}_{\Delta_{i,j} h(Y) \neq 0} \mathbf{1}_{\Delta_{j,k} h(Y') \neq 0} |\Delta_j h(Z)|^2 |\Delta_k h(Z')|^2 ],
\end{align}
\noindent where as before the supremum is taken over recombinations  $Y, Y', Z, Z'$ of $R, R', \tilde{R}$. Let $E$ be the event that all perturbations of the instructions in~\eqref{eq:brh_voronoi} propagate at most $T$ levels. There is $\ep > 0$, depending only on the parameters of the models, such that $\mathbb{P}(E^c) \leq (1 - \ep)^T$.

\noindent As before, conditioned on $E$, if $|j - i| \leq |R| K$, the perturbation in $i$ might be propagating past the position, corresponding to instruction $j$, leading to difficulties in the analysis of $\Delta_{i,j} h(Y)$. This is the reason for conditioning further on the events $E_1, E_2, E_3$ corresponding to respectively $0, 1,$ or $2$ of the conditions $\{|i - j| \geq |R|K, |j - k| \geq |R|K\}$ holding. Note that $E_1, E_2$ and $E_3$ are deterministic. 
 
\noindent In this setting, we also study the event that all Voronoi cells are small. For that purpose, as in~\cite{LRP}, introduce the event $\Omega_n(X)$,
\begin{align}
\notag \Omega_n (X) \coloneqq \left( \max_{1 \leq j \leq n}  R(X_j; X) \leq n^{-1/d} \rho_n \right), 
\end{align}
\noindent where $\rho_n = (\ln n)^{1/d + \ep'}$ for $\ep'$ sufficiently small. Then, after conditioning on the realization of the hidden chain, a proof as in~\cite[Lemma 6.8]{LRP} leads to
\begin{align}
\label{eq:omegan} n^{\eta} ( 1 - \mathbb{P}(\Omega_n (X))) \to 0,
\end{align} as $n \to \infty$, and for all $\eta > 0$. 

\noindent We now estimate $B_{|R|}(h)$. Write, 
\begin{flalign}
\notag  \mathbb{E} & [ \mathbf{1}_{\Delta_{i,j} h(Y) \neq 0} \mathbf{1}_{\Delta_{j,k} h(Y') \neq 0} |\Delta_j h(Z)|^2 |\Delta_k h(Z')|^2 ] \\
\notag = & \quad \mathbb{E}  [ \mathbf{1}_{\Delta_{i,j} h(Y) \neq 0} \mathbf{1}_{\Delta_{j,k} h(Y') \neq 0} |\Delta_j h(Z)|^2 |\Delta_k h(Z')|^2 \mathbf{1}_{E^c}] \\
\notag & \quad+ \mathbb{E}  [ \mathbf{1}_{\Delta_{i,j} h(Y) \neq 0} \mathbf{1}_{\Delta_{j,k} h(Y') \neq 0} |\Delta_j h(Z)|^2 |\Delta_k h(Z')|^2 \mathbf{1}_{E} \mathbf{1}_{\Omega_n^c}]  \\
\notag & \quad+ \mathbb{E}  [ \mathbf{1}_{\Delta_{i,j} h(Y) \neq 0} \mathbf{1}_{\Delta_{j,k} h(Y') \neq 0} |\Delta_j h(Z)|^2 |\Delta_k h(Z')|^2 \mathbf{1}_{E} \mathbf{1}_{\Omega_n} \mathbf{1}_{E_1}]\\
\notag &\quad + \mathbb{E}  [ \mathbf{1}_{\Delta_{i,j} h(Y) \neq 0} \mathbf{1}_{\Delta_{j,k} h(Y') \neq 0} |\Delta_j h(Z)|^2 |\Delta_k h(Z')|^2 \mathbf{1}_{E} \mathbf{1}_{\Omega_n} \mathbf{1}_{E_2}]\\
\label{eq:brh_vor_cond} & \quad+ \mathbb{E}  [ \mathbf{1}_{\Delta_{i,j} h(Y) \neq 0} \mathbf{1}_{\Delta_{j,k} h(Y') \neq 0} |\Delta_j h(Z)|^2 |\Delta_k h(Z')|^2 \mathbf{1}_{E} \mathbf{1}_{\Omega_n} \mathbf{1}_{E_3}].
\end{flalign} 

\noindent Using $|\Delta_j h(Z)|, |\Delta_k h(Z')| \leq 1$, we get that the first two terms in~\eqref{eq:brh_vor_cond} are bounded by $\mathbb{P}(E^c)  +\mathbb{P}(\Omega_n^c)$. Next, 
\begin{align}
\notag \mathbb{E} &  [ \mathbf{1}_{\Delta_{i,j} h(Y) \neq 0} \mathbf{1}_{\Delta_{j,k} h(Y') \neq 0} |\Delta_j h(Z)|^2 |\Delta_k h(Z')|^2 \mathbf{1}_{E} \mathbf{1}_{\Omega_n} \mathbf{1}_{E_1}] && \\
\notag \leq & \quad  \mathbf{1}_{E_1}  \mathbb{E} [  |\Delta_j h(Z)|^2 |\Delta_k h(Z')|^2 \mathbf{1}_{E} \mathbf{1}_{\Omega_n}] &&\\
\label{eq:brh_vor_e1} \leq & \quad C \mathbf{1}_{E_1} T^4 n^{-4 - 2 \alpha/d} \rho_n^{4d}, &&
\end{align} 
\noindent where we have used the Cauchy-Schwarz inequality. 

\noindent Next, define as before, 
\begin{align}
\notag  S_1 \coloneqq & S(Y) \Delta S(Y^i), \quad S_2 \coloneqq  S(Y)  \Delta S(Y^j),  \\
\notag S_3 \coloneqq & S(Y') \Delta S((Y')^j), \quad S_4 \coloneqq  S(Y') \Delta S((Y')^k). 
\end{align}
\noindent Further, let $S_0 = S(Y) \cap S(Y^i) \cap S(Y^j)$ and $S_0' =  S(Y') \cap S((Y')^j) \cap S((Y')^k)$. By Proposition~\ref{prop:conds_voronoi}(ii), it follows that conditioned on $\Omega_n$, 
\begin{align}
\notag \mathbf{1}_{\Delta_{i,j} h(Y) \neq 0} \leq & \sum_{s_1 \in S_1, s_2 \in S_2} \mathbf{1}_{d_{S_0}(s_1, \partial K) \leq 2n^{-1/d} \rho_n }  \mathbf{1}_{d_{S_0}(s_2, \partial K) \leq 2n^{-1/d} \rho_n } \mathbf{1}_{d_{S_0}(s_1, s_2) \leq 2n^{-1/d} \rho_n}.
\end{align}
\noindent Conditioned on $E_3$, the sets $S_1, S_2 \cup S_3$ and $S_4$ are pairwise disjoint. 
\begin{align}
\notag \mathbb{E} &  [ \mathbf{1}_{\Delta_{i,j} h(Y) \neq 0}   \mathbf{1}_{\Delta_{j,k} h(Y') \neq 0} |\Delta_j h(Z)|^2 |\Delta_k h(Z')|^2 \mathbf{1}_{E} \mathbf{1}_{\Omega_n} \mathbf{1}_{E_3}] &&\\
\notag \leq & \quad C \mathbf{1}_{E_3} T^4 n^{-4 - 2 \alpha/d} \rho_n^{4d} \mathbb{E} [ \mathbf{1}_{\Delta_{i,j} h(Y) \neq 0}  \mathbf{1}_{\Delta_{j,k} h(Y') \neq 0}   \mathbf{1}_{E} \mathbf{1}_{\Omega_n}].&&
\end{align} 
\noindent By conditioning on the realization of all  hidden chains $H$.
\begin{align}
\notag  \mathbb{E} &  [ \mathbf{1}_{\Delta_{i,j} h(Y) \neq 0}  \mathbf{1}_{\Delta_{j,k} h(Y') \neq 0}   \mathbf{1}_{E} \mathbf{1}_{\Omega_n}]  \\
\notag = & \mathbb{E}[ \mathbb{E} [ \mathbf{1}_{\Delta_{i,j} h(Y) \neq 0}  \mathbf{1}_{\Delta_{j,k} h(Y') \neq 0}   \mathbf{1}_{E} \mathbf{1}_{\Omega_n} | H]]  \\
\notag \leq & \mathbb{E} \bigg[  \mathbb{E}\bigg[ \sum_{ \substack{ s_1 \in S_1, s_2 \in S_2 \\ s_1' \in S_3, s_2' \in S_4}}   \mathbf{1}_{d_{S_0}(s_1', \partial K) \leq 2n^{-1/d} \rho_n } \mathbf{1}_{d_{S_0}(s_1, s_2) \leq 2n^{-1/d} \rho_n} \mathbf{1}_{d_{S_0'}(s_1', s_2') \leq 2n^{-1/d} \rho_n}  \mathbf{1}_{E} \mathbf{1}_{\Omega_n} | H \bigg] \bigg] \\
\notag \leq & \mathbb{E} \mathbb{E} \bigg[ \sum_{ s_2 \in S_2, s_1' \in S_1} \mathbf{1}_{d_{S_0}(s_1', \partial K) \leq 2n^{-1/d} \rho_n }  \mathbf{1}_{E} \mathbf{1}_{\Omega_n}  \\ 
\notag & \quad \quad \mathbb{E} \bigg[   \sum_{ s_1 \in S_1, s_2' \in S_4}  \mathbf{1}_{d_{S_0}(s_1, s_2) \leq 2n^{-1/d} \rho_n}   \mathbf{1}_{d_{S_0'}(s_1', s_2') \leq 2n^{-1/d} \rho_n} \bigg| s_1', s_2\bigg]  \bigg|  H \bigg]. 
\end{align}
\noindent Now, conditioned on $H$, $s_1'$ and $s_2$, we have independence in the innermost expectation. Therefore, the above is bounded by
\begin{align}
\notag  \mathbb{E}    \bigg[ \sum_{ s_2 \in S_2, s_1' \in S_1} \mathbf{1}_{d_{S_0}(s_1', \partial K) \leq 2n^{-1/d} \rho_n }  \mathbf{1}_{E} \mathbf{1}_{\Omega_n} 4 T^2 2^d n^{-2} \rho_n^{2d}  \bigg] \leq C T^4 n^{-2} \rho_n^{2d} n^{- \alpha /d} \rho_n^{\alpha}. 
\end{align} 
\noindent Then, 
\begin{align}
\notag \mathbb{E} &  [ \mathbf{1}_{\Delta_{i,j} h(Y) \neq 0}  \mathbf{1}_{\Delta_{j,k} h(Y') \neq 0} |\Delta_j h(Z)|^2 |\Delta_k h(Z')|^2 \mathbf{1}_{E} \mathbf{1}_{\Omega_n} \mathbf{1}_{E_3}] && \\
\label{eq:brh_vor_e3} \leq & \quad  C \mathbf{1}_{E_3} T^8 n^{-6 - 3\alpha/d} \rho_n^{6d + \alpha}. &&
\end{align}
\noindent Finally, for the event $E_2$, assuming that $|i - j | \geq |R| K$, the other case being identical, 
\begin{align}
\notag \mathbb{E} &  [ \mathbf{1}_{\Delta_{i,j} h(Y) \neq 0}  \mathbf{1}_{\Delta_{j,k} h(Y') \neq 0} |\Delta_j h(Z)|^2 |\Delta_k h(Z')|^2 \mathbf{1}_{E} \mathbf{1}_{\Omega_n} \mathbf{1}_{E_2}]&& \\
\notag \leq &  \quad \mathbb{E} [ \mathbf{1}_{\Delta_{i,j} h(Y) \neq 0}   |\Delta_j h(Z)|^2 |\Delta_k h(Z')|^2 \mathbf{1}_{E} \mathbf{1}_{\Omega_n} \mathbf{1}_{E_2}]&& \\
\label{eq:brh_vor_e2} \leq & \quad  C \mathbf{1}_{E_2} T^6 n^{-5 - 3 \alpha/d} \rho_n^{5d + \alpha}.&&
\end{align}

\noindent Using~\eqref{eq:brh_vor_cond}, ~\eqref{eq:brh_vor_e1}, ~\eqref{eq:brh_vor_e2}, and~\eqref{eq:brh_vor_e3}, leads to
\begin{align}
\notag B_{|R|}(h) \leq & C ((1 - \ep)^T  +\mathbb{P}(\Omega_n^c) +   \mathbf{1}_{E_1} T^4 n^{-4 - 2\alpha/d} \rho_n^{4d} \\
\notag & \quad  +\mathbf{1}_{E_2} T^6 n^{-5 - 3\alpha/d} \rho_n^{5d + \alpha}  +  \mathbf{1}_{E_3} T^8 n^{-6 -3 \alpha/d} \rho_n^{6d + \alpha}).
\end{align}

\noindent Similar arguments yield, 
\begin{align}
\notag B_{|R|}^{(k)}(h) \leq C (\mathbf{1}_{E_1} T^4 n^{-4 - 2\alpha/d} \rho_n^{4d} +  \mathbf{1}_{E_2} T^6 n^{-5 - 3\alpha/d} \rho_n^{5d + \alpha}),\\
\notag B_{|R|}^{(j)}(h) \leq C ( \mathbf{1}_{E_1} T^4 n^{-4 - 2\alpha/d} \rho_n^{4d}  + \mathbf{1}_{E_2} T^6 n^{-5 - 3\alpha/d} \rho_n^{5d + \alpha}).
\end{align}
 
\noindent Then, 
\begin{align}
\notag  \sum_{i = 0}^{|R| - 1} & \sum_{j, k \notin A} \mathbf{1}_{i \neq j \neq k} B_{|R|} (h) \\ 
\notag \leq &  C ( n^3 ( 1 - \ep)^T + n^3 \mathbb{P}(\Omega_n^c) + T^6 n^{-3 - 2\alpha/d} \rho_n^{4d} +T^7 n^{-3 - 3\alpha/d} \rho_n^{5d + \alpha}  +   T^8 n^{-3 - 3\alpha/d} \rho_n^{6d + \alpha}) \\
\notag \leq & C ( n^{-3 - 2 \alpha/d} (\ln n)^{10 + 4 \ep'}),
\end{align}
\noindent where we have chosen $K = c \ln n$, for a suitable $c > 0$, independent of $n$, using also~\eqref{eq:omegan} and the definition of $\rho_n$.

\noindent Moreover,
\begin{align}
\notag  \sum_{i = 0}^{|R| -1} \sum_{j, k \notin A} (\mathbf{1}_{i \neq j = k} + \mathbf{1}_{i = k \neq j} ) B_{|R|}^{(k)}(h)   \leq C ( n^{-3 - 2 \alpha/d} (\ln n)^{10 + 4 \ep'}) , \\
\notag  \sum_{i = 0}^{|R| -1} \sum_{j, k \notin A}  (\mathbf{1}_{i \neq j = k} + \mathbf{1}_{i = j \neq k} ) B_{|R|}^{(j)}(h) \leq C ( n^{-3 - 2 \alpha/d} (\ln n)^{10 + 4 \ep'}).
\end{align}

\noindent The bounds on the variance terms in Proposition~\ref{prop:var_b} become
\begin{align}
\notag  \sqrt{Var(\mathbb{E}[U|R])} \leq & \frac{1}{\sqrt{2}} \sum_{A \subsetneq[|R|]} k_{|R|, A} \bigg(  C  ( n^{-3 - 2 \alpha/d} (\ln n)^{10 + 4 \ep'}) \bigg)^{1/2}\\
\label{eq:cov_var_vor} \leq  & C \sqrt{n} ( n^{ -2 - \alpha/d} (\ln n)^{5 + 2\ep'}).
\end{align}

\noindent Then,~\eqref{eq:thm_cov_vor} follows from~\eqref{eq:cov_var_vor},~\eqref{eq:noncov_vor} and Proposition~\ref{thm:wass_hmm}.
\end{proof}

\noindent Before the proof of the main result is presented, recall the following result~(\cite[Corollary 2.4]{LRP}) concerning the variance.  Let  $X \coloneqq (X_1, \ldots, X_n) \in E^n$, where $E$ is a Polish space. If $X'$ is an independent copy of $X$, and $f: E^n \to \mathbb{R}$ is measurable, with $\mathbb{E}[ f(X)^2] < \infty$,
\begin{align}
\label{eq:var_lower} Var(f(X)) \geq \sum_{i = 1}^n \mathbb{E}[ ( \mathbb{E} [ \Delta_i f(X',X) |X ] )^2].
\end{align} 

\noindent In our setting we take $f = \varphi$. Unlike~\cite{LRP}, the function $\varphi$ is not symmetric and right-hand side of~\eqref{eq:var_lower} cannot be simplified.  

\begin{proof}[Proof of Theorem~\ref{thm:vor_main}] Note that~\eqref{eq:vor_mom} was proved earlier via an application of Efron-Stein's inequality to~\eqref{eq:vor_delta_r}. Furthermore, ~\eqref{eq:vor_clt} follows from Theorem~\ref{thm:vor_clt} and~\eqref{eq:vor_var}. Thus, only~\eqref{eq:vor_var} is left to prove.
\noindent Let $H$ is the realization of the hidden chain for $X$. By the law of the total variance, $Var(\varphi(X)) \geq Var(\varphi(X) | H)$. Let $X'$ be an independent copy of $X$, given $H$. Note that, given $H$, $(X_i)_{i = 1, \ldots, n}$ and $(X_i')_{ i = 1, \ldots, n}$ are independent random variables which are \emph{not} identically distributed. 

\noindent Applying~\eqref{eq:var_lower} to $\varphi(X | H)$, it follows that
\begin{align}
\notag Var(\varphi(X) | H) \geq \sum_{i = 1}^n \mathbb{E}_{X_i'}^H ( \mathbb{E}_X^H [ \varphi(X^i) - \varphi(X) ] )^2,
\end{align}
\noindent where  $X^i = (X_1, \ldots, X_{i-1}, X_i', X_{i+1}, \ldots, X_n)$, and $\mathbb{E}^H$ signifies that $H$ is given. To simplify notation we drop the $H$. The difference with the proof in~\cite{LRP} is that now the variables are no longer identically distributed. Write
\begin{align}
\notag \mathbb{E}_X [ \varphi(X^i) - \varphi(X) ] = \mathbb{E}_X [ \varphi(X^i) - \varphi(X^{(i)}] - \mathbb{E}_X[ \varphi(X) - \varphi(X^{(i)})],  
\end{align}  
\noindent where $X^{(i)}  = (X_1, \ldots, X_{i-1},  X_{i+1}, \ldots, X_n)$.
\noindent By Lemma~\ref{lem:bound_ukq}, 
\begin{align}
\label{eq:var_part_det} \mathbb{E}_X[ \varphi(X) - \varphi(X^{(i)})] \leq c_{d, \alpha} n^{-1 - \alpha/d}.
\end{align}
\noindent We are left to study   $\mathbb{E} [ \varphi(X^i) - \varphi(X^{(i)}]$. Recall that
\begin{align}
\notag \varphi(X^i) - \varphi(X^{(i)}) = & \mathbf{1}_{\{X_i' \in K\}} \sum_{j \neq i} \mathbf{1}_{\{X_j \in K^C\}} v(X_i', X_j; X^{(i,j)}) \\
\notag &  - \mathbf{1}_{\{X_i' \in K^C\}} \sum_{j \neq i} \mathbf{1}_{\{X_j \in K\}} v(X_i', X_j; X^{(i,j)}),   
\end{align}

\noindent Now, for the case $X_i' \in K^C$ (the other case being equivalent). 
\begin{align}
\notag  |\mathbb{E}_{X, X_i'} & [  - \mathbf{1}_{\{X_i' \in K^C\}} \sum_{j \neq i} \mathbf{1}_{\{X_j \in K\}} v(X_i', X_j; X^{(i,j)}) ] | \\
\notag \geq & \quad \mathbb{E}_{X_i'} [\mathbf{1}_{\{X_i' \in \partial K_+^{n^{-1/d}} \}} \sum_{j \neq i} \mathbb{E}_X[  \mathbf{1}_{\{X_j \in K\}} v(X_i', X_j; X^{(i,j)}) ] ],
\end{align} 
\noindent since $v(X_i', X_j; X^{(i,j)}) \geq 0$. Then, 
\begin{align}
\notag  \mathbb{E}_X & [  \mathbf{1}_{\{X_j \in K\}} v(x, X_j; X^{(i,j)}) ] ]  \\
\notag \geq & \mathbb{E}_{X^{(i,j)}} [ c_1 \int_{y \in K} v(x, y; X^{(i,j)}) dy ] \\
\notag \geq &   c_1 Vol(B(x, \beta n^{-1/d}) \cap K) \inf_{y : ||x- y || \leq \beta n^{-1/d}} \mathbb{E}_{X^{(i,j)}}[  v(x,y; X^{(i,j)})],
\end{align} 
\noindent using the independence after conditioning on $H$ and the properties of the model. We want to find an event that implies that $v(x,y; X^{(i,j)}) \geq c n^{-1}$. One instance is when  no point of $X^{(i,j)}$ falls in $B(y, 6 \beta n^{-1/d})$. Indeed, then $B(y, 3 \beta n^{-1/d}) \subset V(y, X^{(i,j)})$. The distance between $y$ and $x$ is less than $\beta n^{-1/d}$, and so there is $z \in B(y, 3 \beta n^{-1/d})$, namely $z = x + \beta n^{-1/d} (x - y)/||x-y||$ such that
\begin{align}
\notag B(z, \beta n^{-1/d}) \subset V(x, (X^{(i,j)}, y)) \subset B(y, 3 \beta n^{-1/d}) \subset V(y; X^{(i,j)}).
\end{align}
\noindent Then, $v(x,y; X^{(i,j)}) \geq Vol(B(z, \beta n^{-1/d}) = \kappa_d \beta^d n^{-1}$. Finally, 
\begin{align}
\notag  \inf_{y : ||x- y || \leq \beta n^{-1/d}}  & \mathbb{E}_{X^{(i,j)}}[  v(x,y; X^{(i,j)})] \\
\notag \geq & \kappa_d \beta^d n^{-1} \mathbb{P} ( X^{(i,j)} \cap B(y, 6 \beta n^{-1/d}) = \emptyset) \\
\notag \geq & \kappa_d \beta^d n^{-1} ( 1 - c_2 \beta^d n^{-1})^n \\
\notag \geq & c_{d, \beta} n^{-1}, 
\end{align}
\noindent for some $c_{d, \beta} > 0$ depending on the parameters of the model, the dimension $d$ and $ \beta$. Then
\begin{align}
\notag  \mathbb{E}_X  [  \mathbf{1}_{\{X_j \in K\}} v(x, X_j; X^{(i,j)}) ] ]  \geq c_{d, \beta} Vol(B(x, \beta n^{-1/d}) ) n^{-1}.
\end{align}
\noindent Therefore, by the very definition of $\gamma(K, r, \beta)$ and since the case $X_i' \in K$ is symmetric, 
\begin{align}
\notag \mathbb{E}_{X_i'} \mathbb{E}_X [ (\varphi(X^i) - \varphi(X^{(i)})^2 ] \geq &  c_{d, \beta} \bigg( c_1 \int_{\partial K_+^{n^{-1/d}} } Vol(B ( x, \beta n^{-1/d}) \cap K)^2 dx \\ 
\notag & \quad + c_1 \int_{\partial K_-^{n^{-1/d}} } Vol(B ( x, \beta n^{-1/d}) \cap K^c)^2 dx\bigg)  \\
\notag = & c_{d, \beta} ( n^{-2} \gamma(K, n^{-1/d}, \beta) + n^{-2} \gamma(K^c, n^{-1/d}, \beta) ).
\end{align} 

\noindent If the rolling ball condition~\eqref{eq:rbc}, and the lower bound on $\partial{K}^{n^{-1/d}}$ both hold, then
\begin{align}
\notag \mathbb{E}_{X_i'} \mathbb{E}_X [ (\varphi(X^i) - \varphi(X^{(i)})^2 ]   \geq c_{d, \beta} S_-(K) \gamma(K, \beta) n^{-2 - \alpha/d},
\end{align} 
\noindent which dominates the contribution~\eqref{eq:var_part_det} from $\mathbb{E}[\varphi (X) - \varphi(X^{(i)})]$. Therefore, finally
\begin{align}
\notag Var(\varphi(X)) \geq c_{d, \beta}^- S_-(K) \gamma(K, \beta) n^{-1 - \alpha/d},
\end{align}
\noindent as desired. 
\end{proof}

\begin{rem}\label{rem:occupancy} Let us expand a bit on another potential application of our generic framework, namely the occupancy problem as studied in~\cite{gkp-2020}. To set up the notation, $(Z_1, \ldots, Z_n)$ is an aperiodic, irreducible and time homogeneous (hidden) Markov chain that transitions between different alphabets. Then to each alphabet is associated a distribution over the collection of all possible letters, giving rise to the observed letters $(X_1, \ldots, X_n)$. We assume that the number of alphabets is finite but that the number of total letters is $\lfloor\alpha n\rfloor$, for some fixed $\alpha >0$. One studies $W \coloneqq f(X_1, \ldots, X_n)$- the number of letters that have not appeared among the $X_1, \ldots, X_n$. Then, an analysis as in the proof of Theorem~\ref{thm:cov} leads to:
\begin{align}
\notag d_K\left(\frac{W - \mathbb{E} W}{\sqrt{Var (W)}} , \mathcal{N} \right) \leq C \left( \frac{ n(\ln n)^3}  {\sqrt{Var(W)^3}} +   \frac{ n^{1/2} (\ln n)^4}{ Var(W)} \right), 
\end{align}
where $Var(W)$ is a function of $n$, $\mathcal{N}$ is the standard normal distribution and $C > 0$ is a constant depending on the parameters of the model, but not on $n$. As mentioned at the beginning of the section, the study of the precise order of growth of the variance of $W$ is not within the scope of the current paper. For the iid case one can show, see, e.g.~\cite{englund-1981}, that $Var(W) \sim (\alpha e^{-1/ \alpha} - (1 + \alpha) e^{-2 / \alpha} ) n$, as $n \to \infty$. 
\end{rem}

\end{document}